\tikzset{>=stealth}
\newtheorem{theorem}{Theorem}[section]
\newtheorem{lemma}[theorem]{Lemma}
\newtheorem{example}[theorem]{Example}
\numberwithin{equation}{section}
\def\qed{\hfill$\Box$\vspace{8pt}}
\begin{document}
\captionsetup[figure]{labelfont={bf},labelformat={default},labelsep=period,name={Fig.}}

\title{Pointwise A Posteriori Error Estimators for Elliptic Eigenvalue Problems}\date{}
\author{Zhenglei Li}
\author{Qigang Liang}
\author[1,*]{Xuejun Xu}
\affil[1]{\small School of Mathematical Science, Tongji University, Shanghai 200092, China and Key Laboratory of Intelligent Computing and Applications (Tongji University), Ministry of Education (2211186@tongji.edu.cn, qigang$\_$liang@tongji.edu.cn, xuxj@tongji.edu.cn)}

\maketitle

{\bf{Abstract:}}\ \ In this work, we propose and analyze a pointwise a posteriori error estimator for simple eigenvalues of elliptic eigenvalue problems with adaptive finite element methods (AFEMs). We prove the reliability and efficiency  of the residual-type a posteriori error estimator in the sense of $L^{\infty}$-norm, up to a logarithmic factor of the mesh size. For theoretical analysis, we also propose a theoretical and non-computable  estimator, and then analyze the relationship between computable estimator and theoretical estimator. A key ingredient in the a posteriori error analysis is some new estimates for regularized derivative Green's functions. This methodology is also extended to the nonconforming finite element approximations. Some numerical experiments verify our theoretical results.

{\bf{Keyword:}}\ \ Elliptic eigenvalue problems, a posteriori error estimates, maximum norm, equivalence,  finite element methods

\section{Introduction} 

This paper considers an a posteriori error estimator for simple eigenvalues of second order elliptic eigenvalue problems with adaptive finite element methods (AFEMs) in the sense of $L^{\infty}$-norm. For approximating second order elliptic partial differential equations, classical finite element methods may perform badly because the exact solution often has local singularities, which leads to the loss of accuracy. In order to cure this problem, AFEMs are proposed in \cite{MR483395}. Over few decades, AFEMs have been extensively studied and successfully brought into  practice (see \cite{MR745088,AINSWORTH19971,MR2194587,MR2231704,i28} and references therein). For elliptic eigenvalue problems,  the 
authors in \cite{i30,i34,i32,i35} analyze some a posteriori error estimates and obtain the reliability and efficiency of these a posteriori error estimators. Moreover, the convergence and optimal complexity of AFEMs for elliptic eigenvalue problems are derived in \cite{MR2430983,MR3347459}.

\par  The a posteriori errors above are measured in the sense of energy norm or $L^2$-norm, thus the discrete solution converges to the exact solution in the sense of average integration. However, it is worth noting that the analysis of pointwise (i.e., $L^{\infty}$-norm) error may be important in many applications (see \cite{MR1839794,MR3549870,MR1079028} and references therein). Therefore, it is significant to propose and analyze the a posteriori error estimator in the sense of $L^{\infty}$-norm, which often poses a great challenge.

\par In recent decades, the studies of the a priori error in the sense of $L^{\infty}$-norm by finite element methods for second order elliptic source problems have been widely developed (see \cite{i2,i3,i4,i7,construction2,i13,MR4435940} and references therein).
For the a posteriori error analysis, the author in \cite{1} derives pointwise a posteriori error estimators over general polygonal domains in $\mathbb{R}^2$ and proves their equivalence to the a priori error (the $L^{\infty}$-norm of $u-u_{h}$). Then these results are extended to three-dimensional and nonconforming cases in \cite{2}. The key ingredient in \cite{1} to prove the equivalence is to give an a priori $W^{2,1}$-estimate for regularized Green's functions. As for the pointwsie a priori error analysis for eigenvalue problems, the sharp convergence rate is obtained in convex domains under the assumption that $u\in W^{2,\infty}(\Omega)$ in \cite{i36}. However, to the best of our knowledge, the study of the pointwise a posteriori error for eigenvalue problems is not available in the literature. 

\par In this paper, we present a pointwise a posteriori error estimator for eigenfunction corresponding to simple eigenvalues, and prove its reliability and efficiency in the sense of   $L^{\infty}$-norm for second order elliptic eigenvalue problems. We must emphasize that it is non-trivial to analyze the reliability  of the pointwise a posteriori error estimator. First, a crucial difference between eigenvalue problems and source problems is that there is no Galerkin orthogonality in the finite element approximation for eigenvalue problems, which increases an extra term in the residual equation. Second, since the a posteriori error estimator is analyzed in the sense of $L^{\infty}$-norm, many estimate techniques developed in a separable Hilbert space in the sense of energy inner product (or $L^{2}$ inner product) are not applicable.

\par To overcome these difficulties and obtain the reliability and efficiency of the a posteriori error estimator for simple eigenvalues in the sense of $L^{\infty}$-norm, we derive some new estimates. A key ingredient in this paper is to obtain a proper estimate for the Galerkin approximation of  regularized derivative Green's functions in the sense of $L^1$-norm. To get it, we use the Aubin-Nitsche argument and take a sign function as the source term on adaptive mesh. By exploiting the $L^{p}$-estimate argument for the gradient of solutions of elliptic partial differential equations and combining some new estimate results, we may prove the  reliability of the pointwise a posteriori error estimator. Then a modification of a local argument (see \cite{1,i20}) gives the efficiency of the a posteriori error estimator. Specifically, we obtain the following result:
\begin{equation}\label{Inequ_pos_err}
   \eta \lesssim \Vert e_h\Vert_{L^{\infty}(\Omega)} \lesssim \vert \ln{h}\vert \eta,
\end{equation}
where $a\lesssim(\gtrsim)\ b$ means $a\leq(\geq)\ Cb$, the letter $C$ is generic positive constant independent of
the mesh size, possibly different at each occurrence, $e_h:=u-u_{h}$ with $u$ and $u_{h}$ denoting the eigenfunction and its adaptive finite element approximation, and $\eta$ is the pointwise a posteriori error estimator (defined in section 4).  In addition, we extend this result to the Crouzeix-Raviart (CR) nonconforming finite element.

\par This paper is organized as follows. In section 2, we introduce the model problem and some mathematical notations. Then we review some classical and useful a priori estimates and derive a priori estimates for regularized Green's functions, which improves the result in \cite{1}. In section 3, we prove the reliability and efficiency of the pointwise a posteriori error estimator. 
In section 4, we extend the results to the CR nonconforming finite element.  Some numerical experiments are given to verify our findings in section 5 and the conclusion is presented in section 6.

\section{Preliminaries}

\subsection{Notation}
Let $\Omega\subset \mathbb{R}^2$ be a polygonal domain without restrictions on the size of the interior angles.  Throughout this paper, for any subset $\widetilde{\Omega} \subset \Omega$, we use the standard notations for the Sobolev spaces $W^{m,p}(\widetilde{\Omega})$ ($H^m(\widetilde{\Omega})$) and $W_{0}^{m,p}(\widetilde{\Omega})$($H_0^m(\widetilde{\Omega})$). Moreover, we denote by $L^p(\widetilde{\Omega})$ and $L^p(\Gamma)$ the usual Lebesgue spaces equipped with the standard norms $\Vert \cdot\Vert_{L^p(\widetilde{\Omega})}$ and  $\Vert \cdot\Vert_{L^p(\Gamma)}$, where  $\Gamma \subset \overline{\Omega}$ is a Lipschitz curve. For notational convenience, we denote by 
\begin{equation*}
	(\phi ,\psi)_{\widetilde{\Omega}}:=\int_{\widetilde{\Omega}} \phi \psi\ dx,\ \ \ \ \langle \phi ,\psi \rangle_\Gamma:=\int_\Gamma\phi \psi\ ds.
\end{equation*}
If $\widetilde{\Omega}=\Omega$, we drop the subscript $\widetilde{\Omega}$ in $(\phi ,\psi)_{\widetilde{\Omega}}$.

\par We consider the model problem
\begin{align}\label{Laplace}
	\begin{cases}
		-\Delta u=\lambda u \ \ &\text{in} \ \Omega,\\
		u=0 \ \ &\text{on} \  \partial \Omega.
	\end{cases}
\end{align}
The variational form of \eqref{Laplace} is to find $(\lambda,u)\in \mathbb{R} \times V (:=H_{0}^{1}(\Omega)) $ such that
\begin{equation}\label{Laplace_variation}
  a(u,v)=\lambda(u,v)\ \ \ \ \forall\ v \in V,\\
\end{equation}
where 
\begin{equation*}
	a(v ,w)=\int_\Omega \nabla v \cdot \nabla w \ dx \ \ \ \ \forall\ v ,w \in V.
\end{equation*}
Define an linear operator $T$ such that for any $ f\in L^2(\Omega)$,
\begin{align}\label{defT}
    a(Tf,v)=(f,v)\ \ \ \ \forall\ v\in V.
\end{align}
Then we may obtain the equivalent form of \eqref{Laplace_variation} as
 $Tu=\lambda^{-1}u$.
\par Let $V_h$ be the space of continuous piecewise linear polynomials vanishing on $\partial \Omega$ associated with a shape-regular partition $\mathcal{T}_h$. The discrete eigenpair $(\lambda_h,u_h)$ satisfies
\begin{equation}\label{def_u_h}
a(u_h,v_h)=\lambda_h(u_h,v_h) \ \ \ \ \forall \ v_h\in V_h.
\end{equation}
We set $ h=\max_{K \in \mathcal{T}_h} h_K,$ where $h_K$ is the diameter of element $K \in \mathcal{T}_h$, and assume that there exists a positive constant $\gamma$ such that
\begin{equation}\label{size}
    h\leq h_K^{\gamma} \ \ \ \  \forall\ K\in \mathcal{T}_h.
\end{equation}
Let $\mathcal{E}_h$ be the set of all edges of elements contained in $\mathcal{T}_h$. For any element $K$, we use $\omega_K$ to denote the union of all elements which share common edge with $K$.  We now introduce the jump operator. Given an edge $E\subset\Omega$ shared by two elements $K_{+}$ and $K_{-}$, i.e., $E=\partial K_+ \cap \partial K_-$,
 for any piecewise function $v$, we define  
 \begin{equation*}
     [v]_E=(v|_{K_+})|_E-(v|_{K_-})|_E
 \end{equation*}
 and
 \begin{equation*}
     [\nabla v]_E=(\nabla v|_{K_+})|_E-(\nabla v|_{K_-})|_E.
 \end{equation*}
 By convention, if $E\in \mathcal{E}_h$ is a boundary edge, we set
\begin{equation*}
    [v]_E=v|_{E}, \ \ \ \ [\nabla v]_E=\nabla v |_{E}.
\end{equation*}
For simplicity, we denote by 
 \begin{align*}
    [\frac{\partial v}{\partial {n} 
 }]_E &=[\nabla v]_E \cdot {n}_E,
 \end{align*}
 where ${n}_E$ is the unit outer normal vector of $E$. 
The choice of the particular normal is arbitrary, but is considered to be fixed once and for all. In subsequent sections, we will drop the subscript $E$ without causing confusion. 
 
\par Next we introduce the definitions of some operators. For the space $V_h$, we define the Galerkin projection $G_h$: $H_0^1(\Omega)\rightarrow V_h$ such that
 \begin{equation}\label{def_G_h}
     a(G_hv,v_h)=a(v,v_h)\ \ \ \ \forall \ v_h\in V_h.
 \end{equation}
The usual Lagrange interpolation  $I_h$:
 $C^0(\overline{\Omega})\rightarrow V_h$ has the local approximation properties \cite{susanne} :
\begin{equation}\label{app_I_h}
    \Vert v-I_hv\Vert_{W^{m,p}(K)}\lesssim h_K^{2-m}\vert v \vert_{W^{2,p}(K)}, \ \ \text{for}\  m=0,1, \ p\geq 1,\  v\in W^{2,p}(K).
\end{equation}
Then we define a discrete operator $T_h$ such that for any $f\in L^2(\Omega)$,
\begin{equation}\label{defT_h}
a(T_h f,v_h)=(f,v_h)\ \ \ \ \forall \ v_h\in V_h.
\end{equation}

\subsection{Some a priori estimates}
In this subsection, we first introduce a regularized Green's function and then review some useful a priori estimates. 
\par Given a fixed point $x_0\in \overline{\Omega}$, we may define a smooth function $\delta_0$ satisfying the following properties \cite{1}:
\begin{equation}\label{prodelta1}
    \text{supp}\  \delta_0 \subset B:=\{x\in\Omega : \vert x-x_0\vert < \rho \},\ \ \int_{\Omega} \delta_0 \ dx=1,
\end{equation}

\begin{equation}\label{prodelta2}
     0\leq \delta_0 \lesssim \rho^{-2},\ \ \Vert  \delta_0 \Vert_{W^{k,\infty}(\Omega)}\lesssim \rho^{-k-2}, \ k=1,2,....
\end{equation}
where $\rho =h^\beta$ with an underdetermined parameter $\beta\geq 1$. Then we may define regularized Green's function $g_0$ satisfying
\begin{align}\label{defg}
    \begin{cases}
		-\Delta g_0=\delta_0\ \ \ \ &\text{in} \ \Omega,\\
		g_0=0\ \ \ \ &\text{on} \ \partial\Omega.       
    \end{cases}\ \  
\end{align}

\par It is known that the following estimate holds for some $p>1$ under appropriate assumptions on the boundary condition \cite{6,5}: 
\begin{equation}\label{priori}
    \vert v \vert_{W^{2,p}(\Omega)}\leq C_p \Vert \Delta v \Vert_{L^p(\Omega)}  \ \ \ \ \forall \ v \in W^{2,p}(\Omega)\cap W_0^{1,p}(\Omega).
\end{equation}
For a smooth domain, it is well known that $C_p=\frac{C}{p-1}$ when $p\rightarrow 1$ (see \cite{4}). The following estimates are straightforward in view of \eqref{prodelta2}
\begin{equation*}\label{smooth1}
    \vert g_0 \vert_{W^{2,p}(\Omega)}\lesssim \frac{\rho^{2(\frac{1}{p}-1)}}{p-1}.
\end{equation*}
Choosing $p=1+\vert \ln\rho\vert ^{-1}$, we  may obtain 
\begin{equation}\label{smooth2}
     \vert g_0 \vert_{W^{2,1}(\Omega)}\lesssim \vert \ln\rho\vert.
\end{equation}
For a polygon without restrictions on the size of the interior angles, \eqref{priori} holds for $1<p<\frac{4}{3}$ \cite{6,5}, but the dependence on $p$ of the constant $C_p$ in \eqref{priori} is not known explicitly. In \cite{1}, a slightly weaker estimate is derived as
\begin{equation}\label{w_poly_g0}
    \vert g_0 \vert_{W^{2,1}(\Omega)}\lesssim \vert \ln \rho \vert^2.
\end{equation}
However, we may prove that \eqref{smooth2} still holds for the polygonal domain (see Appendix of this paper).

\section{A posteriori error analysis}
In this section, we first give some a priori estimates for a regularized derivative Green's function (defined in \eqref{def_g1}). Then in subsection 3.2, for convenience of theoretical analysis, we introduce two pointwise estimators, including computable and theoretical estimators. Specifically, given an eigenpair $(\lambda,u)$ and  the corresponding discrete eigenpair $(\lambda_h,u_h)$, the computable estimator is defined by
\begin{equation*}
\eta:=\max_{K\in\mathcal{T}_h}\left(h_K^2\Vert \lambda_hu_h\Vert_{L^{\infty}(K)}+h_K\Vert[\frac{\partial u_h}{\partial \bm{n}}]\Vert_{L^{\infty}(\partial K\backslash\partial \Omega)}\right),
\end{equation*}
and  the theoretical estimator is defined by
\begin{equation*}
\eta^*:=\max_{K\in\mathcal{T}_h}\left(h_K^2\Vert u_h\Vert_{L^{\infty}(K)}+h_K\Vert[\frac{\partial T_{h}u}{\partial \bm{n}}]\Vert_{L^{\infty}(\partial K\backslash\partial \Omega)}\right),   
\end{equation*}
where $T_h$ is defined in \eqref{defT_h}. We provide an upper bound of $\Vert e_h\Vert_{L^{\infty}(\Omega)}$ controlled by the sum of these two estimators in this subsection,  where $e_h:=u-u_h$ with $u$ being the exact eigenfunction $(\|u\|_{L^{2}(\Omega)}=1)$ and $u_h$ being the discrete eigenfunction $(\|u_{h}\|_{L^{2}(\Omega)}=1)$ corresponding to $u$. In subsection 3.3, we analyze the relationship between $\eta$ and $\eta^*$. Finally, we shall prove the main result, namely the reliability and efficiency of $\eta$.
\begin{theorem}\label{main result}
     Let $\Omega$ be an arbitrary polygonal domain, $(\lambda,u)$ is an eigenpair of   \eqref{Laplace}, $(\lambda_h,u_h)$ is the corresponding discrete eigenpair, then the pointwise a posteriori error estimator $\eta$ has the  reliability and efficiency, up to a logarithmic factor
of a small enough mesh size $h$, namely
\begin{equation}\label{Equivalence111111}
    \eta \lesssim ||e_h||_{L^{\infty}(\Omega)}\lesssim |\ln h| \eta.
\end{equation}
\end{theorem}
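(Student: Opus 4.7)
The plan is to prove reliability ($\|e_h\|_{L^\infty(\Omega)}\lesssim|\ln h|\eta$) and efficiency ($\eta\lesssim\|e_h\|_{L^\infty(\Omega)}$) separately, with reliability being the substantive direction that requires the regularized (derivative) Green's function machinery together with the auxiliary estimator $\eta^\ast$.

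To prove reliability, I would first fix a point $x_0\in\overline{\Omega}$ at which $|e_h|$ is (essentially) maximized and express
$\|e_h\|_{L^\infty(\Omega)}\approx(e_h,\delta_0)=a(e_h,g_0)$, incurring only an error of order $\rho\,\|\nabla e_h\|$ controlled by \eqref{prodelta2}. Using the eigenvalue equations for both $u$ and $u_h$ together with the Lagrange interpolant $I_h g_0\in V_h$, I would decompose
\begin{equation*}
a(e_h,g_0)=\lambda(e_h,g_0)+(\lambda-\lambda_h)(u_h,g_0)+\lambda_h(u_h,g_0-I_h g_0)-a(u_h,g_0-I_h g_0).
\end{equation*}
The first two summands are higher-order thanks to known a priori bounds for simple eigenvalues and the uniform estimate $\|g_0\|_{L^1(\Omega)}\lesssim1$ coming from \eqref{prodelta2}. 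For the third term, factoring $\|\lambda_h u_h\|_{L^\infty(K)}$ out of each element, applying \eqref{app_I_h}, and invoking the sharpened $W^{2,1}$-bound \eqref{smooth2} on $g_0$ produces precisely the element piece of $\eta$ amplified by $|\ln h|$.

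The real difficulty lies in the fourth term: element-wise integration by parts (using $\Delta u_h=0$ on each $K$) reduces it to a sum of jump integrals $\sum_E\int_E [\partial u_h/\partial n](g_0-I_h g_0)\,ds$, whose natural weight is a Lebesgue norm of $\nabla(g_0-I_h g_0)$ rather than $|g_0|_{W^{2,1}(\Omega)}$. This is exactly why Section~3.1 is devoted to a \emph{regularized derivative} Green's function and to an $L^1$-bound on its Galerkin approximation, obtained through an Aubin--Nitsche argument with a sign function as source on a single element together with an $L^p$-estimate for the gradient. To apply that estimate cleanly I would replace $u_h$ by $T_h u$ in the jump term and absorb the resulting discrepancy, which is precisely the reason the theoretical estimator $\eta^\ast$, featuring $[\partial T_h u/\partial n]$ rather than $[\partial u_h/\partial n]$, is introduced alongside $\eta$. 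The outcome is the intermediate bound $\|e_h\|_{L^\infty(\Omega)}\lesssim|\ln h|(\eta+\eta^\ast)$. I would close the loop by comparing $\eta^\ast$ with $\eta$: from $u_h-T_h u=T_h(\lambda_h u_h-u)$ and an edge-wise inverse estimate one gets $\eta^\ast\lesssim\eta+\text{h.o.t.}$, which promotes the intermediate bound to the desired reliability estimate.

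For efficiency I would use the localized bubble-function strategy of \cite{1,i20}: on each element $K$ and each interior edge $E$, multiply $\lambda_h u_h$ (resp.\ $[\partial u_h/\partial n]$) by an interior (resp.\ edge) bubble and by the sign of that residual at its maximum-magnitude point; standard scaling and inverse inequalities then transfer $L^2$-bubble identities to $L^\infty$ bounds and match each contribution of $\eta$ against $\|e_h\|_{L^\infty(\omega_K)}$ plus a higher-order oscillation term coming from the mismatch between $\lambda u$ and $\lambda_h u_h$. The most serious obstacle in the whole argument is the pointwise treatment of the jump residual in reliability: with no Galerkin orthogonality available and no workable Hilbert-space duality in $L^\infty$, one cannot simply pair the jump against $v-I_h v$ in the usual way, and the success of the proof hinges on the new $L^1$-estimate for the Galerkin approximation of the derivative Green's function combined with the consistency $\eta\sim\eta^\ast$, which together account for exactly one logarithmic factor.
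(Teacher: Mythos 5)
Your overall skeleton matches the paper's: reduce $\Vert e_h\Vert_{L^\infty(\Omega)}$ to $|(\delta_0,e_h)|=|a(e_h,g_0)|$, run the residual equation against $g_0-I_hg_0$ with the $W^{2,1}$ bound \eqref{smooth2}, obtain the intermediate bound $\Vert e_h\Vert_{L^\infty(\Omega)}\lesssim|\ln h|(\eta+\eta^*)$, close with $\eta^*\lesssim\eta+o(1)\Vert e_h\Vert_{L^\infty(\Omega)}$, and prove efficiency by bubble functions. However, there is a genuine gap in your treatment of the consistency term $\lambda(e_h,g_0)+(\lambda-\lambda_h)(u_h,g_0)=(\lambda u-\lambda_h u_h,g_0)$. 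Declaring it ``higher-order thanks to known a priori bounds and $\Vert g_0\Vert_{L^1(\Omega)}\lesssim1$'' does not work: pairing $\lambda(e_h,g_0)$ with $\Vert g_0\Vert_{L^1(\Omega)}\lesssim 1$ yields $\lambda\Vert e_h\Vert_{L^\infty(\Omega)}$ with an $O(1)$ constant that cannot be absorbed into the left-hand side, and a priori convergence rates (which depend on the unknown regularity of $u$) are not admissible in a reliability bound that must be expressed through the estimator. The paper spends Lemma \ref{lem lam-lamh} on exactly this term: an Aubin--Nitsche duality with $-\Delta\omega=(T-T_h)u$, the $W^{2,1}$ bound \eqref{est_w} for $\omega$, and Babu\v{s}ka--Osborn theory give $\Vert\lambda u-\lambda_h u_h\Vert_{L^2(\Omega)}\lesssim\eta^*+h^2\Vert e_h\Vert_{L^\infty(\Omega)}$; this duality step, not the jump residual, is where $\eta^*$ (with $[\partial T_hu/\partial n]$) actually enters.

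Relatedly, you misplace the ``real difficulty.'' The jump term $\sum_K\langle[\partial u_h/\partial n],g_0-I_hg_0\rangle_{\partial K}$ is handled routinely: a scaled trace inequality gives $\Vert g_0-I_hg_0\Vert_{L^1(\partial K)}\lesssim h_K|g_0|_{W^{2,1}(K)}$, so this term is bounded by $|\ln h|$ times the edge part of $\eta$ directly, with no need for the derivative Green's function or for replacing $u_h$ by $T_hu$ there. The derivative Green's function $g_1$ and the estimate $\Vert G_hg_1\Vert_{L^1(\Omega)}\lesssim h^{1/3}h_K^{-1}$ are used only in the final comparison (Lemma \ref{lem eta-eta*__}): writing $T_hu-\lambda_h^{-1}u_h=T_he_h$ and evaluating $(\nabla T_he_h\cdot\boldsymbol{\nu})(x_K)=(e_h,G_hg_1)\leq\Vert e_h\Vert_{L^\infty(\Omega)}\Vert G_hg_1\Vert_{L^1(\Omega)}$ yields $\eta^*\lesssim\eta+h^{1/3}\Vert e_h\Vert_{L^\infty(\Omega)}$; your proposed ``edge-wise inverse estimate'' would only produce $\Vert T_he_h\Vert_{L^\infty(K)}$ without the crucial $h^{1/3}$ gain needed for absorption. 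Two further points you gloss over: dropping the remainder in the reduction $\Vert e_h\Vert_{L^\infty(\Omega)}\lesssim|(\delta_0,e_h)|+h_{K_0}^{\alpha\beta}$ requires the a priori lower bound $\Vert e_h\Vert_{L^\infty(K^*)}\gtrsim h_{K^*}^2$ of Lemma \ref{Baohe}, and in the efficiency proof the oscillation term $\Vert\lambda u-\lambda_hu_h\Vert_{L^\infty(\omega_K)}$ must again be bounded a posteriori via $|\lambda-\lambda_h|\lesssim\eta+h^{1/3}\Vert e_h\Vert_{L^\infty(\Omega)}$ rather than dismissed as higher order.
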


\subsection{Some estimates for regularized derivative Green's functions}
\par For any element $K\in \mathcal{T}_h$, we use $x_K$ to denote the barycenter of $K$. Then there exists a function $\delta_{1}\in C_0^{\infty}(K)$ (the construction for it can be found in \cite{construction2,construction1}) satisfying
\begin{equation}\label{prodT}
	 (\delta_1, v_h)_K=v_h(x_K) \ \ \forall\  v_h\in P_1(K),\  \Vert \delta_{1}\Vert_{W^{k,\infty}(\Omega)}\lesssim h_K^{-k-2},k=0,1,2,....
\end{equation} 
We define a regularized derivative Green's function $g_1$ as follows:
\begin{align}\label{def_g1}
	\begin{cases}
		-\Delta g_{1}=\nabla\delta_{1}\cdot \boldsymbol{\nu},\ \  &\text{in} \ \Omega,\\
		g_{1}=0,\ \ &\text{on} \ \partial \Omega,
	\end{cases}
\end{align}
where $\boldsymbol{\nu}$ is an arbitrary direction vector. The introduction of the regularized derivative Green's functions plays a crucial role in analyzing the relationship between $\eta$ and $\eta^*$. In subsequent analysis, we need some estimates for $g_1$ and $G_hg_1$ (recall that $G_h$ stands for the Galerkin projection) in different norms. We introduce a result about the gradient estimate in \cite{MR2141694}, which is useful for estimating the $W^{1,p}$-norm of $g_1$.
\begin{lemma}\label{lemma gradient estimate}
    Let $D$ be a Lipschitz domain. Let $U$ solve  the Dirichlet problem $-\Delta U=\text{div}F$ in $D$ with the homogeneous  Dirichlet condition on  $\partial D$, where $\boldsymbol{F}\in (L^p(D))^2$. Then the following $W^{1,p}$-estimate holds
\begin{equation}\label{gradient}
    ||{U}||_{W^{1,p}(D)}\lesssim ||{\boldsymbol{F}}||_{L^{p}(D)},
\end{equation}
provided that $\frac{4}{3}- \varepsilon<p< 4+\varepsilon$, where $\varepsilon>0$ depends only on $D$.
\end{lemma}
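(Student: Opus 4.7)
The plan is to derive the $W^{1,p}$ gradient estimate in two stages: first dispatch $p=2$ by a direct energy argument, then reach the open range $(\tfrac{4}{3}-\varepsilon, 4+\varepsilon)$ by invoking the sharp $L^p$ theory of the Dirichlet Laplacian on planar Lipschitz domains. The weak formulation of the problem is
\[
\int_D \nabla U\cdot\nabla\varphi\,dx = -\int_D \boldsymbol{F}\cdot\nabla\varphi\,dx,\qquad \varphi\in H^1_0(D),
\]
and choosing $\varphi = U$, together with Cauchy--Schwarz and Poincar\'e, yields $\|U\|_{W^{1,2}(D)}\lesssim\|\boldsymbol{F}\|_{L^2(D)}$, which settles the case $p=2$.

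For general $p$, the first step I would take is to represent $\nabla U$ via the Dirichlet Green's function $G(x,y)$ on $D$. Since $G(x,\cdot)$ vanishes on $\partial D$, integration by parts in the pointwise representation $U(x)=\int_D G(x,y)\,\mathrm{div}\,\boldsymbol{F}(y)\,dy$ gives
\[
U(x)=-\int_D \nabla_y G(x,y)\cdot\boldsymbol{F}(y)\,dy,\qquad \nabla U(x)=-\int_D \nabla_x\nabla_y G(x,y)\,\boldsymbol{F}(y)\,dy.
\]
Thus $\nabla U$ is the image of $\boldsymbol{F}$ under a Calder\'on--Zygmund-type operator whose kernel is $\nabla_x\nabla_y G(x,y)$. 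On the whole plane this kernel reduces to a standard CZ operator and is $L^p$-bounded for every $1<p<\infty$; on a Lipschitz $D$, the boundary correction introduces additional singularities near corners/kinks of $\partial D$, and $L^p$-boundedness becomes available only in a restricted window of exponents.

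The second step is to pin down that window. A duality argument reduces the estimate at exponent $p$ to the analogous estimate at exponent $p'$: testing against $\boldsymbol{G}\in L^{p'}(D)^2$ and using the auxiliary $V\in H^1_0(D)$ solving $-\Delta V=\mathrm{div}\,\boldsymbol{G}$ gives
\[
\int_D \nabla U\cdot\boldsymbol{G}\,dx = -\int_D \boldsymbol{F}\cdot\nabla V\,dx,
\]
so the bound for $p$ is equivalent to that for $p'$. Since $(\tfrac{4}{3})'=4$, the interval $(\tfrac{4}{3}-\varepsilon,4+\varepsilon)$ is essentially self-conjugate up to readjusting $\varepsilon$; it therefore suffices to work on one side, say on $(2,4+\varepsilon)$, then combine with $p=2$ and apply Riesz--Thorin interpolation. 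The required one-sided $L^p$-boundedness of the kernel $\nabla_x\nabla_y G$ for $p$ up to $4+\varepsilon$ on a planar Lipschitz domain is the layer-potential result of Jerison--Kenig / Dahlberg packaged in \cite{MR2141694}, with $\varepsilon$ depending only on the Lipschitz character of $\partial D$. A final application of Poincar\'e's inequality, justified because $U\in W^{1,p}_0(D)$, promotes the gradient bound to the stated $W^{1,p}$ bound.

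The step I expect to be the principal obstacle, were one to prove the lemma from scratch rather than cite \cite{MR2141694}, is the nontangential maximal function analysis required to push $p$ to the endpoint $4$: this endpoint is sharp in two dimensions and reflects the limited regularity of $\nabla_x\nabla_y G(x,y)$ near a Lipschitz boundary, which in turn requires the full machinery of harmonic measure estimates and boundedness of singular integrals on Lipschitz curves. For the purposes of the present paper, invoking the cited black-box result suffices.
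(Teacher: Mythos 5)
The paper offers no proof of this lemma at all: it is quoted as a known result from \cite{MR2141694}, so there is no internal argument to compare against, and your outline is consistent with how such estimates are established in the literature (energy identity at $p=2$, duality between $p$ and $p'$ --- though your dual identity should read $\int_D\nabla U\cdot\boldsymbol{G}\,dx=\int_D\boldsymbol{F}\cdot\nabla V\,dx$ without the minus sign, which is immaterial for the bound --- and interpolation). Since the substantive content, namely the $L^p$-boundedness of $\boldsymbol{F}\mapsto\nabla U$ up to the sharp planar exponent $4+\varepsilon$ on a Lipschitz domain, is precisely the cited black box, your proposal and the paper ultimately rest on the same external theorem and are essentially the same "proof."
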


\par Now, we give a $W^{1,p}$-estimate for $g_1$.
\begin{lemma}\label{estg_1}
      Let $\delta_1$ and $g_1$ be defined as in \eqref{prodT} and \eqref{def_g1}, then the following estimate holds    \begin{equation}\label{estimate_g1_1p}
        \vert g_1\vert_{W^{1,p}(\Omega)} \lesssim h_K^{-\frac{2}{q}},
    \end{equation}
    where $\frac{4}{3}-\varepsilon_1<p\leq 2$ and $\frac{1}{p}+\frac{1}{q}=1$ and $\varepsilon_1>0$ depends only on $\Omega$.
\end{lemma}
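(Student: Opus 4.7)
The plan is to recast the source term $\nabla\delta_1\cdot\boldsymbol{\nu}$ as the divergence of an $L^p$-vector field, and then invoke Lemma \ref{lemma gradient estimate} directly. Since $\boldsymbol{\nu}$ is a fixed direction vector (hence divergence-free), one has the identity
\begin{equation*}
\nabla\delta_1\cdot\boldsymbol{\nu}=\mathrm{div}(\delta_1\boldsymbol{\nu}),
\end{equation*}
so that $g_1$ solves $-\Delta g_1=\mathrm{div}\boldsymbol{F}$ in $\Omega$ with homogeneous Dirichlet data, where $\boldsymbol{F}:=\delta_1\boldsymbol{\nu}$.

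Next I would apply Lemma \ref{lemma gradient estimate} on $D=\Omega$, which yields
\begin{equation*}
|g_1|_{W^{1,p}(\Omega)}\lesssim \|\boldsymbol{F}\|_{L^p(\Omega)}=\|\delta_1\boldsymbol{\nu}\|_{L^p(\Omega)}\lesssim \|\delta_1\|_{L^p(K)},
\end{equation*}
provided $p$ lies in the admissible range $\frac{4}{3}-\varepsilon_1<p<4+\varepsilon_1$ from that lemma; restricting further to $p\leq 2$ gives the stated range.

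Finally I would estimate $\|\delta_1\|_{L^p(K)}$ by combining the support property $\mathrm{supp}\,\delta_1\subset K$ with the $L^\infty$-bound from \eqref{prodT}: since $|K|\lesssim h_K^2$ and $\|\delta_1\|_{L^\infty(K)}\lesssim h_K^{-2}$, H\"older's inequality gives
\begin{equation*}
\|\delta_1\|_{L^p(K)}\leq |K|^{1/p}\|\delta_1\|_{L^\infty(K)}\lesssim h_K^{2/p}\cdot h_K^{-2}=h_K^{-2(1-1/p)}=h_K^{-2/q}.
\end{equation*}
Chaining the two inequalities produces \eqref{estimate_g1_1p}.

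The argument has no real obstacle — the only subtlety is matching the range of $p$: the divergence-form trick is what makes the right-hand side fall inside the hypothesis $\boldsymbol{F}\in(L^p(D))^2$ of Lemma \ref{lemma gradient estimate}, and the restriction to polygonal domains $\Omega$ limits $p$ away from $1$ and $\infty$. The exponent $h_K^{-2/q}$ arises purely from the scaling of a bump of size $h_K^{-2}$ supported on a set of measure $h_K^2$, so no finer cancellation is needed.
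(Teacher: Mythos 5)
Your proposal is correct and follows essentially the same route as the paper: rewrite $\nabla\delta_1\cdot\boldsymbol{\nu}$ as $\mathrm{div}\,\boldsymbol{F}$ for an $\boldsymbol{F}$ supported in $K$ with $\|\boldsymbol{F}\|_{L^\infty}\lesssim h_K^{-2}$, apply Lemma \ref{lemma gradient estimate}, and conclude by H\"older on $K$. The only (cosmetic) difference is that you take the explicit choice $\boldsymbol{F}=\delta_1\boldsymbol{\nu}$, which is valid since $\boldsymbol{\nu}$ is constant, whereas the paper constructs $\boldsymbol{F}$ by an indefinite integral; both satisfy the same properties \eqref{pro F}.
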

\begin{proof}
Recall that $\delta_1\in C_0^{\infty}(K)$. We may construct a function $\boldsymbol{F}$ by the indefinite integral, such that 
\begin{equation}\label{pro F}
  \boldsymbol{F}\in W_0^{1,\infty}(K),\ \ \ \ \text{div} \boldsymbol{F}=\nabla\delta_{1}\cdot \boldsymbol{\nu},\ \ \ \ \ 
  ||{\boldsymbol{F}}||_{L^{\infty}(\Omega)}\lesssim h_K^{-2}.  
\end{equation}
Using Lemma \ref{lemma gradient estimate} and \eqref{pro F}, we have
\begin{align*}
	\vert g_{1}\vert_{W^{1,p}(\Omega)}&\lesssim ||{\boldsymbol{F}}||_{L^{p}(\Omega)}
 \lesssim h_K^{-\frac{2}{q}},
\end{align*}
which completes the proof. \qed
\end{proof}
 
With the help of Lemma \ref{estg_1}, we may obtain some estimates for $G_hg_1$.
\begin{lemma}\label{estG_hg_1}
    It holds that
\begin{equation}\label{estimate_g1_H1}
        \vert G_hg_1\vert_{H^1(\Omega)}\lesssim h_K^{-1}
    \end{equation}
    and
    \begin{equation}\label{estimate_g1_L1}
        \Vert G_hg_1\Vert_{L^1(\Omega)}\lesssim h^{\frac{1}{3}}h_K^{-1}.
    \end{equation}
\end{lemma}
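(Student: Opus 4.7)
The estimate \eqref{estimate_g1_H1} is the routine half: by the $H^{1}$-stability of the Galerkin projection, $|G_{h}g_{1}|_{H^{1}(\Omega)}\le |g_{1}|_{H^{1}(\Omega)}$, and Lemma \ref{estg_1} applied with $p=q=2$ gives $|g_{1}|_{H^{1}(\Omega)}\lesssim h_{K}^{-1}$.

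The real work is in \eqref{estimate_g1_L1}. The plan is a duality argument that exploits the moment identity $\int_{K}\delta_{1}\,dx=1$ (which follows from \eqref{prodT} by taking $v_{h}\equiv 1$). Writing
\[
\|G_{h}g_{1}\|_{L^{1}(\Omega)}=\sup_{\phi\in L^{\infty}(\Omega),\ \|\phi\|_{L^{\infty}(\Omega)}\le 1}(G_{h}g_{1},\phi),
\]
I would fix such a $\phi$ and set $\psi:=T\phi\in H_{0}^{1}(\Omega)$, so that $-\Delta\psi=\phi$. Two successive applications of Galerkin orthogonality (first to pass from $\psi$ to $G_{h}\psi$ using $a(G_{h}g_{1},\psi-G_{h}\psi)=0$, then to transfer the equation from $G_{h}g_{1}$ back to $g_{1}$) give
\[
(G_{h}g_{1},\phi)=a(G_{h}g_{1},\psi)=a(G_{h}g_{1},G_{h}\psi)=a(g_{1},G_{h}\psi)=(\nabla\delta_{1}\cdot\boldsymbol{\nu},\,G_{h}\psi)_{K}.
\]
Because $\boldsymbol{\nu}$ is a constant direction and $\delta_{1}$ vanishes on $\partial K$, integrating by parts on $K$ moves the derivative onto $G_{h}\psi$; since $G_{h}\psi\in V_{h}$ is piecewise linear, $\nabla G_{h}\psi\cdot\boldsymbol{\nu}$ is constant on $K$, and the moment identity collapses the resulting integral to the point value $-\nabla G_{h}\psi|_{K}\cdot\boldsymbol{\nu}$. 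Hence $|(G_{h}g_{1},\phi)|\le |\nabla G_{h}\psi|_{K}|$.

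It then remains to bound $|\nabla G_{h}\psi|_{K}|$. Since this gradient is constant on $K$, scaling gives $|\nabla G_{h}\psi|_{K}|\lesssim h_{K}^{-2/r}\|\nabla G_{h}\psi\|_{L^{r}(\Omega)}$ for any $r\ge 1$. I would take $r=3$, which lies strictly inside the admissible range $(\tfrac{4}{3}-\varepsilon,\,4+\varepsilon)$ of Lemma \ref{lemma gradient estimate}; combining the $W^{1,3}$-quasi-stability of $G_{h}$ on the polygon (transferred from the continuous Lemma \ref{lemma gradient estimate} by a standard duality/best-approximation argument) with the polygonal $W^{2,p}$-estimate \eqref{priori} at $p=6/5<\tfrac{4}{3}$ and the two-dimensional Sobolev embedding $W^{2,6/5}(\Omega)\hookrightarrow W^{1,3}(\Omega)$ yields
\[
\|\nabla G_{h}\psi\|_{L^{3}(\Omega)}\lesssim\|\nabla\psi\|_{L^{3}(\Omega)}\lesssim\|\psi\|_{W^{2,6/5}(\Omega)}\lesssim\|\phi\|_{L^{\infty}(\Omega)}\le 1.
\]
Therefore $|\nabla G_{h}\psi|_{K}|\lesssim h_{K}^{-2/3}=h_{K}^{1/3}h_{K}^{-1}\le h^{1/3}h_{K}^{-1}$, where the last inequality uses $h_{K}\le h$.

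The principal technical obstacle I anticipate is the discrete $W^{1,3}$-stability of $G_{h}$ on an arbitrary polygon: Lemma \ref{lemma gradient estimate} supplies only the continuous counterpart, and transferring it to the projection requires a separate Nitsche-type bootstrap (compare the treatment in \cite{1}). The exponent $1/3$ appearing in \eqref{estimate_g1_L1} is not intrinsic to the problem --- any $r\in(2,4)$ would yield a bound of the form $h^{1-2/r}h_{K}^{-1}$ --- but $r=3$ is the convenient choice producing the clean factor $h^{1/3}$.
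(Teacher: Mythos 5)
Your treatment of \eqref{estimate_g1_H1} is fine and essentially equivalent to the paper's (the paper instead tests with $G_hg_1$ itself and uses the moment property of $\delta_1$, but both are one-line arguments). For \eqref{estimate_g1_L1} you take a genuinely different route, and it contains a genuine gap. After the (correct) reduction
$(G_hg_1,\phi)=a(g_1,G_h\psi)=-(\nabla G_h\psi\cdot\boldsymbol{\nu})(x_K)$, your entire estimate rests on the discrete stability
$\Vert \nabla G_h\psi\Vert_{L^{3}(\Omega)}\lesssim \Vert \nabla\psi\Vert_{L^{3}(\Omega)}$. This is not a "standard duality/best-approximation argument" from Lemma \ref{lemma gradient estimate}: that lemma concerns the continuous problem only, and $W^{1,p}$-stability of the Ritz projection for $p>2$ on an \emph{arbitrary} polygon (the paper explicitly allows reentrant corners and slit domains) and on merely shape-regular, strongly graded adaptive meshes (cf.\ \eqref{size}) is a substantial result in its own right, requiring Schatz--Wahlbin-type weighted or local maximum-norm machinery; it is not established in \cite{1}, which works throughout with $W^{2,1}$-bounds on regularized Green's functions and interpolation operators precisely to avoid such discrete stability. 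The naive duality attempt is circular (bounding $\Vert\nabla G_h\psi\Vert_{L^3}$ by testing against $F\in L^{3/2}$ leads back to $\Vert\nabla G_hw\Vert_{L^{3/2}}$ for the dual problem), and replacing $L^3$ by $L^2$ via an inverse inequality destroys the gain: you then only recover $|\nabla G_h\psi(x_K)|\lesssim h_K^{-1}$, i.e.\ the trivial bound without the factor $h^{1/3}$.

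The paper avoids this obstacle by a different splitting: with $\varphi=\mathrm{sgn}(G_hg_1)$ and $-\Delta\psi=\varphi$ it writes
$\Vert G_hg_1\Vert_{L^1(\Omega)}=a(G_hg_1,\psi-I_h\psi)+a(g_1,I_h\psi)$, where $I_h$ is the Lagrange interpolant. The first term costs $h^{r}|G_hg_1|_{H^1(\Omega)}\lesssim h^{r}h_K^{-1}$ via the $H^{1+r}$-regularity of $\psi$ (with $r<r_0=\pi/\theta$ and $r_0>1/2$, so $r=1/3$ is admissible); the second is bounded directly by $|g_1|_{W^{1,p}(\Omega)}|\psi|_{W^{1,p^*}(\Omega)}\lesssim h_K^{-2/p^*}$ using Lemma \ref{estg_1}, the $W^{2,q}$-regularity \eqref{priori} with $q<4/3$, the Sobolev embedding, and only the \emph{local} $W^{1,p^*}$-stability of $I_h$ from \eqref{app_I_h}. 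Taking $p^*=3$ gives $h_K^{-2/3}\le h^{1/3}h_K^{-1}$. Your choice of $G_h\psi$ in place of $I_h\psi$ elegantly kills the interpolation-error term by orthogonality, but transfers all the difficulty into a global discrete $W^{1,3}$-estimate that you would have to prove separately; as written, the proof is incomplete at exactly that point.
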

\begin{proof}
     In view of \eqref{prodT} and \eqref{def_g1}, we have
\begin{align*}
    \vert G_hg_1\vert_{H^1(\Omega)}^2 &=a(G_hg_1,G_hg_1)=(G_hg_1,\nabla \delta_1 \cdot \boldsymbol{\nu})\\
    &=(\nabla G_hg_1\cdot \boldsymbol{\nu}, \delta_1)\lesssim
     h_K^{-1}\vert G_hg_1\vert_{H^1(\Omega)},
\end{align*}
which yields \eqref{estimate_g1_H1}.  The proof of \eqref{estimate_g1_L1} is more complicated. Let 
 \begin{align*}
 	\varphi =sgn(G_hg_1)=\begin{cases}
 	 1&G_h g_1> 0,\\
   0 &G_h g_1= 0,\\
 	 -1&G_h g_1<0.
 	\end{cases}
 \end{align*} 
We consider an auxiliary problem
 \begin{align*}
 	\begin{cases}
 		-\Delta \psi=\varphi\ \  &\text{in} \  \Omega,\\
 		\psi=0\ \ &\text{on} \ \partial \Omega.
 	\end{cases}
 \end{align*}
Let $r_0=\frac{\pi}{\theta}$, where ${\theta}$ is the largest angle among the interior corners of $\Omega$. It is well known that  $\psi\in H^{1+r}(\Omega)\cap H_0^1(\Omega)$ for all $r\in (0,r_0)$, and
\begin{equation}
	\Vert \psi \Vert_{H^{1+r}(\Omega)}\lesssim \Vert \Delta \psi\Vert_{L^{2}(\Omega)}.
\end{equation}
Due to the interpolation theory and the H{\"o}lder inequality, we have
 \begin{align}\label{eG_h}
\begin{aligned}
  \Vert G_hg_1\Vert_{L^1(\Omega)}
  &=(G_hg_1,\varphi)=a(G_hg_1,\psi)\\
  &=a(G_hg_1,\psi-I_h \psi)+a(G_hg_1,I_h\psi)\\
  &=a(G_hg_1,\psi-I_h \psi)+a(g_1,I_h\psi)\\
 	&\lesssim h^r\vert G_hg_1\vert_{H^1(\Omega)}+\vert g_1\vert_{W^{1,p}(\Omega)}\vert \psi\vert_{W^{1,p^*}(\Omega)},
\end{aligned}
 \end{align}
 where $\frac{4}{3}<p< 2$ and $\frac{1}{p}+\frac{1}{p^*}=1$. 
By the Sobolev embedding theorem and the regularity estimate, we obtain
\begin{equation}
	\vert \psi \vert_{W^{1,p^*}(\Omega)}\lesssim \Vert \psi \Vert_{W^{2,q}(\Omega)}\lesssim \Vert \varphi \Vert_{L^{q}(\Omega)}\lesssim 1,
\end{equation}
where $1<q=\frac{2p^*}{p^*+2}<\frac{4}{3}$. Then combining \eqref{estimate_g1_1p} and \eqref{eG_h}, we complete the proof on choosing $s=\frac{1}{3}$ (note that $r_0>\frac{1}{2} 
$ and $2<p^*<4$).\qed
\end{proof}

\subsection{Upper bound}
 In this subsection, we provide an upper bound of $||e_h||_{L^{\infty}(\Omega)}$
controlled by the sum of two estimators ($\eta$ and $\eta^*$).  For simplicity, we assume that each element $K\in \mathcal{T}_h$ is a closed set.
Let $x_0\in \overline{\Omega}$ satisfy 
\begin{equation}\label{defK0}
    \vert e_h(x_0)\vert=\Vert e_h\Vert_{L^{\infty}(\Omega)}.
\end{equation}

\par For the subsequent analysis, our first step is to prove 
\begin{equation}\label{assosiation111}
    \Vert e_h\Vert_{L^{\infty}(\Omega)}\lesssim \vert (\delta_0,e_h)\vert,
\end{equation}
where $\delta_0$ is defined in \eqref{prodelta1} and \eqref{prodelta2}. 
We may use a similar argument as in \cite{1} to get the following lemma. For simplicity, we omit the detailed proof. 
\begin{lemma}\label{lem nochetto}
Let $0< \alpha\leq 1$ be the H{\"o}lder continuity exponent of the eigenfunction of \eqref{Laplace}. Then there exists a positive number $h_0$ such that for any $h\leq h_0$, the following estimate holds
\begin{equation}\label{associate}
    \Vert e_h\Vert_{L^{\infty}(\Omega)}\lesssim
    \vert (\delta_0,e_h)\vert + h_{K_0}^{\alpha\beta},
\end{equation}
where $K_0$ is an element containing $x_0$ and $\beta\geq 1$.
\end{lemma}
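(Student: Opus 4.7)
The plan is to use the regularized delta $\delta_0$, whose support $B$ has radius $\rho=h^{\beta}$ and which integrates to one, to rewrite the pointwise extremum $e_h(x_0)$ as a smoothed duality pairing plus a local oscillation remainder, and then to control that remainder using the H\"older continuity of $u$ together with a piecewise-linear inverse estimate for $u_h$.

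First I would use $\int_\Omega \delta_0\,dx=1$ together with the defining property \eqref{defK0} of $x_0$ to write
\begin{equation*}
\|e_h\|_{L^\infty(\Omega)}=|e_h(x_0)|
=\left|\int_\Omega e_h(x_0)\delta_0(x)\,dx\right|
\leq |(\delta_0,e_h)|+\int_B |e_h(x_0)-e_h(x)|\delta_0(x)\,dx,
\end{equation*}
so the task reduces to bounding the $\delta_0$-weighted oscillation of $e_h$ on $B$ by $h_{K_0}^{\alpha\beta}$.

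Next I would split $e_h=u-u_h$ and estimate the two pieces separately on $B$. For the continuous component, the H\"older continuity of $u$ gives $|u(x)-u(x_0)|\lesssim |x-x_0|^{\alpha}\leq\rho^{\alpha}$ directly. For the discrete component, I would take $h_0$ small enough that, under the mesh grading assumption \eqref{size}, the ball $B$ is contained in the patch $\omega_{K_0}$ of elements sharing a point with $K_0$; then $u_h$ is piecewise affine on $\omega_{K_0}$, and the inverse estimate together with the uniform $L^\infty$-stability of the discrete eigenfunction (inherited from $\|u_h\|_{L^2(\Omega)}=1$ and the standard a priori theory) yields $|u_h(x)-u_h(x_0)|\lesssim h_{K_0}^{-1}|x-x_0|\leq h_{K_0}^{-1}\rho$ for $x\in B$. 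Integrating each bound against $\delta_0$ (which introduces only a factor of one, by normalization) gives
\begin{equation*}
\int_B |e_h(x_0)-e_h(x)|\delta_0(x)\,dx\ \lesssim\ \rho^{\alpha}+h_{K_0}^{-1}\rho.
\end{equation*}

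Finally, a careful choice of $\beta\geq 1$ in conjunction with the mesh grading parameter $\gamma$ of \eqref{size} (so that $h_{K_0}\geq h^{1/\gamma}$) absorbs both $\rho^{\alpha}=h^{\alpha\beta}$ and $h_{K_0}^{-1}\rho=h_{K_0}^{-1}h^{\beta}$ into a constant multiple of $h_{K_0}^{\alpha\beta}$, which yields \eqref{associate}. The main obstacle is controlling the $u_h$ contribution: the inverse factor $h_{K_0}^{-1}$ has to be beaten by the smallness of $\rho$, which is precisely what forces the threshold $h\leq h_0$ and the parameter coupling between $\beta$ and $\gamma$; keeping the argument local to $\omega_{K_0}$ when $x_0$ lies on an element boundary or on $\partial\Omega$, and ensuring that $\|u_h\|_{L^\infty(\omega_{K_0})}$ remains uniformly bounded on such boundary patches, is the most delicate point of the proof.
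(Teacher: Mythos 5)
The paper gives no proof of this lemma (it defers to \cite{1} and omits the details), and your route --- rewriting $|e_h(x_0)|$ as the smoothed pairing $(\delta_0,e_h)$ plus a $\delta_0$-weighted oscillation over $B$, then treating $u$ by H\"older continuity and $u_h$ by a local inverse estimate --- is precisely the argument of \cite{1} being invoked, so the strategy is the right one. The gap is in your final absorption step. With the paper's definition $\rho=h^{\beta}$, where $h=\max_K h_K$ is the \emph{global} mesh size, your oscillation bound for $u$ is $\rho^{\alpha}=h^{\alpha\beta}$; since $h_{K_0}\le h<1$, this satisfies $h^{\alpha\beta}\ge h_{K_0}^{\alpha\beta}$, and the ratio $(h/h_{K_0})^{\alpha\beta}$ is unbounded under adaptive refinement whenever $x_0$ sits in a locally refined element --- and it only grows as $\beta$ increases. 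So no coupling of $\beta$ with the grading parameter $\gamma$ of \eqref{size} can turn $h^{\alpha\beta}$ into $C\,h_{K_0}^{\alpha\beta}$: the inequality you need goes in the wrong direction. What your argument actually establishes is the weaker bound with $h^{\alpha\beta}$ in place of $h_{K_0}^{\alpha\beta}$ (which, for what it is worth, is all that the subsequent deduction of \eqref{assosiation111} from Lemma \ref{Baohe} and the choice $\beta>2\gamma/\alpha$ requires); it does not establish \eqref{associate} as literally stated unless $\rho$ is instead taken as a power of the local size $h_{K_0}$.

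There is a second, smaller gap in the $u_h$ term. The crude bound $\|\nabla u_h\|_{L^{\infty}(\omega_{K_0})}\lesssim h_{K_0}^{-1}$ gives an oscillation contribution $h_{K_0}^{-1}\rho=h_{K_0}^{-1}h^{\beta}$, and forcing this below $h^{\alpha\beta}$ (or $h_{K_0}^{\alpha\beta}$) requires $\beta(1-\alpha)$ to dominate the grading exponent, which is unattainable when $\alpha=1$ --- a value the lemma explicitly allows. The standard repair is to compare $u_h$ with the Lagrange interpolant of $u$, giving $\|\nabla u_h\|_{L^{\infty}(K)}\lesssim h_K^{\alpha-1}+h_K^{-1}\|e_h\|_{L^{\infty}(\Omega)}$, so that the $u_h$ oscillation over $B$ is $\lesssim \rho\,h_{K_0}^{\alpha-1}+\rho\,h_{K_0}^{-1}\|e_h\|_{L^{\infty}(\Omega)}$; the last term is then absorbed into the left-hand side once $\rho\le c\,h_{K_0}$, which is the same smallness condition you already need to guarantee $B\subset\omega_{K_0}$. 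These constraints on $\beta$ (relative to $\alpha$ and $\gamma$) should be stated explicitly rather than left as ``a careful choice.''
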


\par In order to drop the second term in \eqref{associate}, we give an a priori lower bound of $\Vert e_h\Vert_{L^{\infty}(\Omega)}$, whose proof may be found in Appendix.  

\begin{lemma}\label{Baohe}
There exists a positive number $h_0$ such that for any $h\leq h_0$ there exists an element $K^*\in \mathcal{T}_h$ satisfying
    \begin{equation*}
         \Vert e_h \Vert_{L^{\infty}(K^*)}\gtrsim h_{K^*}^2.
    \end{equation*}
\end{lemma}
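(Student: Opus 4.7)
\textbf{Proof proposal for Lemma \ref{Baohe}.} My plan is a bubble-function argument localized on an element where the normalized eigenfunction $u$ is bounded away from zero. The strategy is to (i) pin down an interior region on which $|u|\geq c_0>0$, (ii) choose an element $K^*$ inside that region, (iii) test the weak form of \eqref{Laplace_variation} against the standard cubic bubble on $K^*$, and (iv) exploit the fact that $u_h|_{K^*}$ is affine to compare the two sides.

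First, interior elliptic regularity applied to $-\Delta u=\lambda u$ tells me that $u\in C^\infty(\Omega)$, and the normalization $\|u\|_{L^2(\Omega)}=1$ forces $u\not\equiv 0$. So there exist $x^*\in\Omega$, $r_0>0$ and $c_0>0$ such that, after replacing $u$ by $-u$ if needed, $u\geq c_0$ on $\overline{B(x^*,r_0)}\subset\Omega$; these quantities depend only on the fixed eigenpair. For $h\leq h_0$ small enough I can pick $K^*\in\mathcal T_h$ with $K^*\subset B(x^*,r_0)$. Let $b_{K^*}$ be the standard cubic (product-of-barycentric-coordinates) bubble on $K^*$, extended by zero to $\Omega$; then $b_{K^*}\in H^1_0(\Omega)$ with $b_{K^*}\geq 0$, $\|b_{K^*}\|_{L^\infty(K^*)}\sim 1$, $\int_{K^*}b_{K^*}\,dx\sim h_{K^*}^2$, $\|\nabla b_{K^*}\|_{L^\infty(K^*)}\sim h_{K^*}^{-1}$, and $\|\Delta b_{K^*}\|_{L^\infty(K^*)}\sim h_{K^*}^{-2}$.

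Next I would combine the continuous identity $a(u,b_{K^*})=\lambda(u,b_{K^*})$ from \eqref{Laplace_variation} with the vanishing identity $a(u_h,b_{K^*})=\nabla u_h|_{K^*}\cdot\int_{K^*}\nabla b_{K^*}\,dx=\nabla u_h|_{K^*}\cdot\int_{\partial K^*}b_{K^*}\boldsymbol n\,ds=0$, which holds because $\nabla u_h|_{K^*}$ is constant and $b_{K^*}$ vanishes on $\partial K^*$. Subtracting gives $a(e_h,b_{K^*})=\lambda(u,b_{K^*})$, and the lower bound on the right-hand side is
\begin{equation*}
\lambda(u,b_{K^*})\geq \lambda c_0\int_{K^*}b_{K^*}\,dx\gtrsim h_{K^*}^2.
\end{equation*}
For the upper bound on the left, Green's identity (valid because $b_{K^*}$ is smooth on $K^*$ and $e_h\in H^1(K^*)$) yields
\begin{equation*}
a(e_h,b_{K^*})=-\int_{K^*}e_h\,\Delta b_{K^*}\,dx+\int_{\partial K^*}e_h\,\frac{\partial b_{K^*}}{\partial\boldsymbol n}\,ds,
\end{equation*}
hence $|a(e_h,b_{K^*})|\leq\|e_h\|_{L^\infty(K^*)}\bigl(\|\Delta b_{K^*}\|_{L^1(K^*)}+\|\partial_{\boldsymbol n}b_{K^*}\|_{L^1(\partial K^*)}\bigr)\lesssim\|e_h\|_{L^\infty(K^*)}$, using the bubble's scalings together with $|K^*|\sim h_{K^*}^2$ and $|\partial K^*|\sim h_{K^*}$. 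Chaining the two estimates delivers $\|e_h\|_{L^\infty(K^*)}\gtrsim h_{K^*}^2$.

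I do not expect a serious obstacle; the proof is essentially a saturation-type argument. The only points requiring care are the $h$-independent existence of the sign-definite ball (a one-time fact about the fixed eigenfunction) and the fact that $u_h$ is piecewise \emph{linear}, so that $a(u_h,b_{K^*})$ collapses to zero; the bubble estimates themselves are standard scaling results on a shape-regular triangle.
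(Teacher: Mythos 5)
Your proof is correct, and it takes a genuinely different route from the paper's. The paper argues pointwise at a maximizer $y_0$ of $|u|$: since $|\Delta u(y_0)|=\lambda\Vert u\Vert_{L^\infty(\Omega)}\gtrsim 1$, some eigenvalue of the Hessian of $u$ is bounded away from zero, and a second-difference Taylor expansion along the corresponding eigenvector shows that a specially constructed, $L^\infty$-stable local linear interpolant $\widetilde I_h$ (built from three points on $\partial K^*$, one pair aligned with that eigenvector) must miss $u$ by $\gtrsim h_{K^*}^2$; because $\widetilde I_h$ reproduces the affine function $u_h|_{K^*}$, this interpolation error transfers to $e_h$. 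Your bubble-function duality replaces all of that geometric machinery: you need only that $u$ is continuous and nonvanishing on some interior ball, the exact identity $a(e_h,b_{K^*})=\lambda(u,b_{K^*})$ (using that $\nabla u_h$ is elementwise constant, so $a(u_h,b_{K^*})=0$ by the divergence theorem), and the standard scalings $\Vert\Delta b_{K^*}\Vert_{L^1(K^*)}+\Vert\partial_{n}b_{K^*}\Vert_{L^1(\partial K^*)}\lesssim 1$ and $\int_{K^*}b_{K^*}\,dx\sim h_{K^*}^2$ — essentially the same $W^{2,1}$-duality mechanism the paper already uses in the efficiency part of the proof of Theorem \ref{main result}, here run in reverse to produce a lower bound. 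Every step checks out: the sign-definite ball exists because $\Vert u\Vert_{L^2(\Omega)}=1$ and $u$ is continuous in $\Omega$; for $h\le h_0$ an element $K^*$ fits inside it by shape regularity; and all hidden constants depend only on the fixed eigenpair and the mesh regularity, not on $h$. The trade-offs: the paper's construction pins $K^*$ at the maximizer of $|u|$ and needs an interior $W^{3,\infty}$ bound plus a careful choice of interpolation nodes, whereas your argument is more elementary, requires less regularity, and locates $K^*$ merely in a fixed interior ball — which is all the statement asks for. It also extends verbatim to the CR setting of Section 4, since $b_{K^*}$ is supported in a single element.
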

 Thanks to Lemmas \ref{lem nochetto} and \ref{Baohe}, together with \eqref{size}, choosing $\beta > \frac{2\gamma}{\alpha}$, we may deduce \eqref{assosiation111}. 
\par Next we shall derive the upper bound of $\Vert e_h\Vert_{L^{\infty}(\Omega)}$ from the residual error equation. By integrating by parts, $e_h$ satisfies the error equation
\begin{align*}
    a(e_h,v)&=\sum_{K\in \mathcal{T}_h}  \left( (\lambda u ,v-v_h)_K+\langle  \frac{\partial u_h}{\partial \bm{n}},v-v_h \rangle_{\partial K}) \right) + (\lambda u-\lambda_h u_h,v_h)\\
    &= \sum_{K\in \mathcal{T}_h}\left((\lambda_h u_h ,v-v_h)_K+\langle  \frac{\partial u_h}{\partial \bm{n}},v-v_h \rangle_{\partial K})\right) + (\lambda u-\lambda_h u_h,v),
\end{align*}
where $v\in V$, $v_h\in V_h$. Taking $v=g_0$ and $v_h=I_h g_0$, in view of  \eqref{smooth2} and  \eqref{assosiation111}, with the interpolation theory and the trace theorem on $K$, we deduce
\begin{align}\label{errorform1} 
\begin{aligned}
\Vert e_h\Vert_{L^{\infty}(\Omega)}
&\lesssim \vert (\delta_0, e_h)\vert=\vert a(e_h,g_0)\vert\\
   &\lesssim \vert \ln{h}\vert  \max_{K\in\mathcal{T}_h}\left(h_K^2\Vert \lambda_h u_h\Vert_{L^{\infty}(K)}+h_K\Vert[\frac{\partial u_h}{\partial \bm{n}}]\Vert_{L^{\infty}(\partial K\backslash \partial\Omega)}\right)\\
    &\ \ \ \ +\vert ( \lambda u-\lambda_h u_{h},g_0 )\vert.
\end{aligned}
\end{align}
Noting that for any $v\in W^{2,1}(\Omega)\cap H_{0}^{1}(\Omega)$, it holds that
\begin{align*}
|v|_{H^{1}(\Omega)}^{2}
&=(\nabla{v},\nabla{v})
=(-\Delta{v},v)\leq ||\Delta{v}||_{L^{1}(\Omega)}||v||_{L^{\infty}(\Omega)}\\
&\lesssim |v|_{W^{2,1}(\Omega)} ||v||_{W^{2,1}(\Omega)}
\lesssim |v|_{W^{2,1}(\Omega)}\left(||\nabla{v}||_{L^{1}(\Omega)}+|v|_{W^{2,1}(\Omega)}\right)\\
&\lesssim  |v|_{W^{2,1}(\Omega)}\left(||\nabla{v}||_{L^{2}(\Omega)}+|v|_{W^{2,1}(\Omega)}\right),
\end{align*}
which, together with the Young inequality, yields
\begin{align}\label{v_estimate_H1}
|v|_{H^{1}(\Omega)}\lesssim |v|_{W^{2,1}(\Omega)}.
\end{align}
Since $g_0\in W^{2,1}(\Omega)\cap H_{0}^{1}(\Omega)$, by \eqref{v_estimate_H1} and the Poincar\'e inequality, we get 
$$\Vert g_0\Vert_{L^2(\Omega)}\lesssim \vert g_0\vert_{W^{2,1}(\Omega)}\lesssim \vert \ln{h}\vert.$$
Inserting this result into \eqref{errorform1}, we obtain
\begin{equation}\label{errorform2}
     \Vert e_h\Vert_{L^{\infty}(\Omega)}\lesssim \vert \ln{h}\vert (\eta+ \Vert \lambda u-\lambda_h u_h\Vert_{L^2(\Omega)}).
\end{equation}
\par In the subsequence, we shall estimate the term $\Vert \lambda u-\lambda_h u_h\Vert_{L^2(\Omega)}$ by using Babu\v{s}ka-Osborn theory in \cite{MR962210}.

\begin{lemma}\label{lem lam-lamh}
There exists a positive number $h_0$ such that for any $h\leq h_0$, we have
    \begin{equation}\label{lamu-lamhuh}
        \Vert \lambda u-\lambda_h u_h\Vert_{L^2(\Omega)}\lesssim \eta^* +h^2\Vert e_h\Vert_{L^{\infty}(\Omega)}.
    \end{equation}
\end{lemma}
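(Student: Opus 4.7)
The plan is to combine the Babu\v{s}ka--Osborn theory for simple eigenvalues with an Aubin--Nitsche duality argument tailored to produce exactly the ingredients of $\eta^*$.

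First, since $\|u\|_{L^{2}(\Omega)}=\|u_h\|_{L^{2}(\Omega)}=1$, writing $\lambda u-\lambda_h u_h=\lambda(u-u_h)+(\lambda-\lambda_h)u_h$ yields
$$\|\lambda u-\lambda_h u_h\|_{L^{2}(\Omega)}\leq\lambda\|u-u_h\|_{L^{2}(\Omega)}+|\lambda-\lambda_h|.$$
For $h\leq h_0$ small enough that $\lambda_h$ isolates the simple eigenvalue $\lambda$, the Babu\v{s}ka--Osborn theory in \cite{MR962210} gives
$$\|u-u_h\|_{L^{2}(\Omega)}+|\lambda-\lambda_h|\lesssim\|(T-T_h)u\|_{L^{2}(\Omega)},$$
so it suffices to bound $\|(T-T_h)u\|_{L^{2}(\Omega)}$ by $\eta^*+h^{2}\|e_h\|_{L^{\infty}(\Omega)}$.

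I would obtain this by duality. For arbitrary $\varphi\in L^{2}(\Omega)$ with $\|\varphi\|_{L^{2}(\Omega)}=1$, set $\psi:=T\varphi\in H_0^{1}(\Omega)$. The defining identities of $T$ and $T_h$ give $a(Tu-T_hu,I_h\psi)=(u,I_h\psi)-(u,I_h\psi)=0$, hence
$$((T-T_h)u,\varphi)=a((T-T_h)u,\psi-I_h\psi)=(u,\psi-I_h\psi)-a(T_hu,\psi-I_h\psi).$$
Since $T_hu$ is piecewise linear, elementwise integration by parts reduces the second term to a sum of jump integrals, so
$$((T-T_h)u,\varphi)=(u,\psi-I_h\psi)-\sum_{E\in\mathcal{E}_h}\bigl\langle[\tfrac{\partial T_hu}{\partial\bm{n}}],\psi-I_h\psi\bigr\rangle_{E}.$$
Splitting $u=u_h+e_h$ and combining local $L^{\infty}$--$L^{1}$ H\"older with the bounds $\|\psi-I_h\psi\|_{L^{1}(K)}\lesssim h_K^{2}|\psi|_{W^{2,1}(K)}$ and (via a scaled trace inequality on $E\subset\partial K$) $\|\psi-I_h\psi\|_{L^{1}(E)}\lesssim h_E|\psi|_{W^{2,1}(\omega_K)}$ yields
$$|((T-T_h)u,\varphi)|\lesssim\bigl(\eta^*+h^{2}\|e_h\|_{L^{\infty}(\Omega)}\bigr)\,|\psi|_{W^{2,1}(\Omega)}.$$
Taking the supremum over $\varphi$ then finishes the proof provided $|\psi|_{W^{2,1}(\Omega)}\lesssim 1$.

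The main obstacle is precisely this $W^{2,1}$-bound on $\psi$, since the Calder\'on--Zygmund constant $C_p$ in $|\psi|_{W^{2,p}(\Omega)}\lesssim\|\varphi\|_{L^{p}(\Omega)}$ is known to blow up as $p\to 1^{+}$ on general polygons, as already flagged in the excerpt. Rather than passing to the limit, I would fix a single $p\in(1,4/3)$ (the range in which the $W^{2,p}$-estimate holds on arbitrary polygons), use $L^{2}\hookrightarrow L^{p}$ to get $|\psi|_{W^{2,p}(\Omega)}\lesssim\|\varphi\|_{L^{2}(\Omega)}$, and then apply H\"older on $\Omega$ to drop from $W^{2,p}$ to $W^{2,1}$, absorbing the now-fixed $C_p$ into the hidden constant. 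A milder subtlety is that the Babu\v{s}ka--Osborn step requires $h$ small enough to secure the spectral gap around $\lambda$, which is exactly the role of the hypothesis $h\leq h_0$ in the statement.
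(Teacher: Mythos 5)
Your proposal is correct and follows essentially the same route as the paper: the Babu\v{s}ka--Osborn reduction to $\Vert (T-T_h)u\Vert_{L^2(\Omega)}$, an Aubin--Nitsche duality argument using Galerkin orthogonality and elementwise integration by parts to produce exactly the terms of $\eta^*$ plus $h^2\Vert e_h\Vert_{L^{\infty}(\Omega)}$, and the $W^{2,1}$-bound on the dual solution obtained via a fixed $p_0\in(1,\frac{4}{3})$ and the H\"older inequality. The only cosmetic difference is that the paper solves the dual problem with source $(T-T_h)u$ itself and gets $\Vert (T-T_h)u\Vert_{L^2(\Omega)}^2$ directly, whereas you test against an arbitrary unit $\varphi\in L^2(\Omega)$ and take a supremum.
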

\begin{proof}    
 From \cite{MR962210},  we know that
    \begin{equation}\label{lam-lamh}
        \lambda_h-\lambda \lesssim \Vert (T-T_h)u\Vert_{L^{2}(\Omega)}
    \end{equation}
and
    \begin{equation}\label{u-uh__}
       \Vert u-u_h\Vert_{L^{2}(\Omega)}\lesssim \Vert (T-T_h)u\Vert_{L^{2}(\Omega)}.
    \end{equation}
The  triangle inequality directly leads to 
\begin{equation}\label{errorform2_1}
    \Vert \lambda u-\lambda_h u_h\Vert_{L^2(\Omega)}\lesssim \Vert (T-T_h)u\Vert_{L^2(\Omega)}.
\end{equation}
Then we consider an auxiliary problem
\begin{align}\label{aubin}
	\begin{cases}
		-\Delta \omega=(T-T_h)u\ \ &\text{in} \ \Omega,\\
		\omega=0\ \ &\text{on} \ \partial \Omega.
	\end{cases}
\end{align}
In view of the definition of $T$ and $T_h$, using a  similar argument as in \eqref{errorform1}, we deduce
\begin{align}\label{align_T_Th_1}
\begin{aligned}
\Vert (T-T_h)u\Vert_{L^{2}(\Omega)}^2
&=a(Tu-T_hu,\omega)=a(Tu-T_hu,\omega -I_h\omega)\\
&=\sum_{K\in\mathcal{T}_h}\left( ( u,\omega-I_h\omega)_K+\langle\frac{\partial T_hu}{\partial \bm{n}},\omega -I_h\omega\rangle_{\partial K}\right)\\
&\lesssim \vert \omega \vert_{W^{2,1}(\Omega)}(\eta^*+h^2\Vert e_h \Vert_{L^{\infty}(\Omega)}).
\end{aligned}
\end{align}
With the aid of \eqref{priori} and  the H{\"o}lder inequality, for a fixed $p_{0}\in (1,\frac{4}{3})$, we obtain
\begin{equation}\label{est_w}
	\vert \omega\vert_{W^{2,1}(\Omega)}\lesssim \vert \omega\vert_{W^{2,p_0}(\Omega)}\lesssim \Vert (T-T_h)u\Vert_{L^{p_0}(\Omega)}\lesssim \Vert (T-T_h)u\Vert_{L^{2}(\Omega)},
\end{equation}
 which, tother with \eqref{errorform2_1} and \eqref{align_T_Th_1}, yields \eqref{lamu-lamhuh}.\qed
\end{proof}
\par Combining \eqref{errorform2} and Lemma \ref{lem lam-lamh}, we may derive an upper bound for $\Vert e_h\Vert_{L^{\infty}(\Omega)}$.
\begin{theorem}\label{ehsum}
    There exists an $h_0>0$ such that 
    \begin{equation}\label{upper bound}
   \Vert e_h\Vert_{L^{\infty}(\Omega)}\lesssim |\ln h|(\eta+\eta^*)
\end{equation}
   holds whenever $h\leq h_0$.
\end{theorem}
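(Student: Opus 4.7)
The plan is to combine the two key estimates already assembled in the section: the residual bound \eqref{errorform2}, which controls $\|e_h\|_{L^\infty(\Omega)}$ by the sum of the computable estimator $\eta$ and the $L^2$-norm $\|\lambda u - \lambda_h u_h\|_{L^2(\Omega)}$ up to a logarithmic factor, together with Lemma \ref{lem lam-lamh}, which in turn bounds $\|\lambda u-\lambda_h u_h\|_{L^2(\Omega)}$ by the theoretical estimator $\eta^*$ plus a harmless power of $h$ multiplying $\|e_h\|_{L^\infty(\Omega)}$. After substitution we obtain an inequality in which $\|e_h\|_{L^\infty(\Omega)}$ appears on both sides, and the goal is to absorb the right-hand occurrence into the left.

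More concretely, I would first apply Lemma \ref{lem lam-lamh} to replace $\|\lambda u-\lambda_h u_h\|_{L^2(\Omega)}$ in \eqref{errorform2}, yielding
\begin{equation*}
\|e_h\|_{L^\infty(\Omega)} \leq C|\ln h|\,(\eta + \eta^*) + C|\ln h|\, h^2\, \|e_h\|_{L^\infty(\Omega)}.
\end{equation*}
Since $|\ln h|\, h^2 \to 0$ as $h\to 0$, there exists $h_0>0$ such that $C|\ln h|\, h^2 \leq \tfrac{1}{2}$ for all $h\leq h_0$. Subtracting the last term from both sides and multiplying by $2$ then delivers the desired inequality \eqref{upper bound}.

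The only real subtlety is the absorption step: one must ensure that the constant $C$ from \eqref{errorform2} and Lemma \ref{lem lam-lamh} is independent of $h$ (which is already guaranteed by the construction, since those constants depend only on $\Omega$, the shape regularity of $\mathcal{T}_h$, and fixed Sobolev embedding constants) so that the threshold $h_0$ is well defined. Aside from this, the argument is a direct chaining of the already-proved estimates, with no further analytical obstacle; I would record the final choice of $h_0$ explicitly to match the statement of the theorem.
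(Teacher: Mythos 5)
Your proposal is correct and follows exactly the paper's own argument: substitute the bound of Lemma \ref{lem lam-lamh} into \eqref{errorform2} and absorb the resulting term $C|\ln h|\,h^2\|e_h\|_{L^\infty(\Omega)}$ into the left-hand side using that $h^2|\ln h|=o(1)$. The paper writes this as $(1-h^2|\ln h|)\|e_h\|_{L^\infty(\Omega)}\lesssim |\ln h|(\eta+\eta^*)$, which is the same absorption step you describe.
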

\begin{proof}
Owing to \eqref{errorform2} and  Lemma \ref{lem lam-lamh}, we obtain
\begin{equation*}
    (1-h^2|\ln h|)\Vert e_h\Vert_{L^{\infty}(\Omega)}\lesssim |\ln h|(\eta+\eta^*).
\end{equation*}
Then we derive \eqref{upper bound} noting that $h^2|\ln h|$ is $o(1)$.\qed
\end{proof}

\subsection{Relationship between $\eta$ and $\eta^*$}
\par Now we restrict our attention to the relationship  between $\eta$ and $\eta^*$. In order to give the reliability of $\eta$, since \eqref{upper bound} holds, it suffices to show that $\eta^*$ can be (mainly) bounded by $\eta$. In fact, we have the following estimate for $\eta^*$. 
\begin{lemma}\label{lem eta-eta*__}
It holds that
   \begin{equation}\label{eta-eta*}
    \eta^*\lesssim \eta+  h^{\frac{1}{3}}\Vert e_h\Vert_{L^{\infty}(\Omega)},
\end{equation}
where $s$ is defined in Lemma \ref{estG_hg_1}.
\end{lemma}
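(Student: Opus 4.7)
The plan is to bound $\eta^*$ in terms of $\eta$ term by term by expressing $T_h u$ through $u_h$ plus a manageable correction. First I would observe that the discrete eigenvalue equation gives $a(\lambda_h^{-1}u_h,v_h)=(u_h,v_h)$ for all $v_h\in V_h$, so by the definition of $T_h$ we have $\lambda_h^{-1}u_h=T_hu_h$ and therefore
\begin{equation*}
T_hu-\lambda_h^{-1}u_h=T_h(u-u_h)=T_he_h.
\end{equation*}
The first (volume) term of $\eta^*$ is then immediately controlled: since $\lambda_h$ is bounded away from $0$, $h_K^2\|u_h\|_{L^\infty(K)}=\lambda_h^{-1}h_K^2\|\lambda_hu_h\|_{L^\infty(K)}\lesssim h_K^2\|\lambda_hu_h\|_{L^\infty(K)}$. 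For the jump term, the identity above yields
\begin{equation*}
\Bigl[\tfrac{\partial T_hu}{\partial\bm{n}}\Bigr]=\lambda_h^{-1}\Bigl[\tfrac{\partial u_h}{\partial\bm{n}}\Bigr]+\Bigl[\tfrac{\partial T_he_h}{\partial\bm{n}}\Bigr],
\end{equation*}
so the task reduces to proving $h_K\|[\partial T_he_h/\partial\bm{n}]\|_{L^\infty(\partial K\setminus\partial\Omega)}\lesssim h^{1/3}\|e_h\|_{L^\infty(\Omega)}$ for every $K\in\mathcal{T}_h$.

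Since $T_he_h\in V_h$ is piecewise linear, its gradient is constant on each element, so the jump across any edge is bounded by the $L^\infty$ norms of $\nabla T_he_h$ on the two adjacent elements; by shape regularity these neighbours have comparable diameters, so it suffices to establish the pointwise gradient bound $\|\nabla T_he_h\|_{L^\infty(K)}\lesssim h^{1/3}h_K^{-1}\|e_h\|_{L^\infty(\Omega)}$. To extract a single directional derivative on $K$, I would fix an arbitrary unit vector $\bm{\nu}$, take $\delta_1\in C_0^\infty(K)$ as in \eqref{prodT}, and let $g_1$ solve \eqref{def_g1} with this $\bm{\nu}$. Integration by parts, together with $(\delta_1,v_h)_K=v_h(x_K)$ applied to the constant $v_h=\nabla T_he_h\cdot\bm{\nu}|_K$, gives
\begin{equation*}
a(g_1,T_he_h)=-(\delta_1,\nabla T_he_h\cdot\bm{\nu})_K=-(\nabla T_he_h\cdot\bm{\nu})\big|_K.
\end{equation*}

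Now I would invoke Galerkin orthogonality $a(g_1-G_hg_1,T_he_h)=0$ and then the definition of $T_h$ applied with test function $G_hg_1\in V_h$:
\begin{equation*}
a(g_1,T_he_h)=a(G_hg_1,T_he_h)=a(T_he_h,G_hg_1)=(e_h,G_hg_1).
\end{equation*}
Combining the last two displays and using H\"older's inequality with the $L^1$-bound from Lemma \ref{estG_hg_1} yields
\begin{equation*}
\bigl|(\nabla T_he_h\cdot\bm{\nu})|_K\bigr|=|(e_h,G_hg_1)|\le\|e_h\|_{L^\infty(\Omega)}\|G_hg_1\|_{L^1(\Omega)}\lesssim h^{1/3}h_K^{-1}\|e_h\|_{L^\infty(\Omega)}.
\end{equation*}
Since $\bm{\nu}$ was arbitrary, this controls $\|\nabla T_he_h\|_{L^\infty(K)}$, and propagating through the jump bound and multiplying by $h_K$ gives the desired $h^{1/3}\|e_h\|_{L^\infty(\Omega)}$ contribution. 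Adding the two previously treated terms completes \eqref{eta-eta*}.

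The main obstacle, already packaged into Lemma \ref{estG_hg_1}, is that the duality only pays off because $G_hg_1$ can be controlled in $L^1(\Omega)$ by the small power $h^{1/3}h_K^{-1}$; any weaker decay here would destroy reliability, since this term must eventually be absorbed into the left-hand side of the final inequality in Theorem \ref{ehsum}. Everything else is a bookkeeping matter: keeping track of the $\lambda_h^{-1}$ factor, checking that neighbouring elements in a shape-regular mesh have comparable diameters so the jump on an interior edge inherits the one-sided gradient estimate, and confirming that the identity $\lambda_h^{-1}u_h=T_hu_h$ really follows directly from \eqref{def_u_h} and \eqref{defT_h}.
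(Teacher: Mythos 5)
Your proposal is correct and follows essentially the same route as the paper: reduce $\eta^*-\eta$ to the elementwise $W^{1,\infty}$-seminorm of $T_hu-\lambda_h^{-1}u_h$ (which you rewrite as $T_he_h$, an equivalent bookkeeping choice), represent the constant gradient on $K$ via $\delta_1$ and the regularized derivative Green's function $g_1$, pass to $G_hg_1$ by Galerkin orthogonality, and conclude with $|(e_h,G_hg_1)|\le\Vert e_h\Vert_{L^{\infty}(\Omega)}\Vert G_hg_1\Vert_{L^1(\Omega)}$ and the $L^1$-bound of Lemma \ref{estG_hg_1}. The only differences are cosmetic (an explicit sign from the integration by parts and the explicit treatment of the volume and jump terms that the paper compresses into one display).
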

\begin{proof}
By the definitions of $\eta$ and $\eta^*$, we know 
\begin{equation}\label{aso_eta-eta^*_2}
    \eta^*\lesssim \eta +\max_{K\in \mathcal{T}_h}h_K\vert T_hu-\lambda_h^{-1}u_h\vert_{W^{1,\infty}(K)}.
\end{equation}
Note that 
\begin{align*}
 |T_hu-\lambda_h^{-1}u_h|_{W^{1,\infty}(K)}\lesssim \sup_{\boldsymbol{\nu}} |(\nabla(T_hu-\lambda_h^{-1}u_h)\cdot \boldsymbol{\nu}
 )(x_K)|,
\end{align*}
 where $\nu$ is an arbitrary direction vector.
Using \eqref{prodT} and \eqref{def_g1},  we deduce 
\begin{align}\label{difference}
\begin{aligned}
	(\nabla(T_hu-\lambda_h^{-1}u_h)\cdot \boldsymbol{\nu}
 )(x_K)&=(T_hu-\lambda_h^{-1}u_h,\nabla \delta_1 \cdot \boldsymbol{\nu})\\
 &=a(T_hu-\lambda_h^{-1}u_h,G_h g_1)\\
	&=(e_h,G_h g_1)\leq \Vert e_h\Vert_{L^{\infty}(\Omega)}\Vert G_h g_1\Vert_{L^1(\Omega)}.
\end{aligned}
\end{align}
Combining Lemma \ref{estG_hg_1} and \eqref{aso_eta-eta^*_2}, we complete the proof.\qed
\end{proof}
\par Now we are in a position to prove the main result, namely the reliability and efficiency of $\eta$.
\par  \noindent{\bf Proof of Theorem \ref{main result}}:  First,  Lemmas \ref{ehsum} and \ref{lem eta-eta*__} lead to the reliability of $\eta$ directly due to the fact that $h^s|\ln h|$ is $o(1)$ for small enough $h$.  For the proof of the efficiency, we modify the local argument in \cite{1,i20}.
\par Let $b_K$ be the usual bubble function, i.e., the polynomial of degree three obtained as the product of the barycentric coordinates of $K$. Taking $v_1=\lambda_h u_h b_K$, with integration by parts, we immediately get
\begin{equation}
	(\lambda u,v_1)=a(e_h,v_1)= (e_h,-\Delta v_1)_K+\langle e_h,\frac{\partial v_1}{\partial \bm{n}}\rangle_{\partial K},
\end{equation}
where we use the fact that $a(u_h,v_1)=0$.
Noting that
$$	 0\leq  b_K\leq 1, \ \ \ \ \Vert b_K\Vert_{W^{2,1}(K)}\lesssim 1,$$
and using the trace theorem, we have
\begin{equation}\label{r1}
	h_K^2 \Vert \lambda_h u_h\Vert_{L^{\infty}(K)}\lesssim \Vert e_h\Vert_{L^{\infty}(K)}+h_K^2\Vert \lambda u-\lambda_h u_h\Vert_{L^{\infty}(K)}.
\end{equation}
\par  Given an interior edge $E\subset \partial K$, we denote by $K_1$ the neighbor element that shares the edge $E$ with $K$. Let $\varphi_E$ be a piecewise quadratic polynomial on $K\cup K_1$, which is one at the midpoint of $E$ and vanishes on the other edges of $K\cup K_1$. Taking $v_2=[\frac{\partial u_h}{\partial \bm{n}}] \varphi_E$, we have
\begin{equation*}
	(\lambda u,v_2)_{K\cup K_1} + \langle v_2,[\frac{\partial u_h}{\partial \bm{n}}]\rangle_E= (e_h,-\Delta v_2)_{K\cup K_1} + \langle e_h,[\frac{\partial v_2}{\partial \bm{n}}]\rangle_E.
\end{equation*} 
Owing to the fact that 
\begin{equation*}
   0\leq \varphi_E\leq 1, \ \ \ \ \Vert \varphi_E\Vert_{W^{2,1}(K\cup K_1)}\lesssim 1,   
\end{equation*}	 
we obtain
\begin{equation}\label{r2}
	h_E\Vert[\frac{\partial u_h}{\partial \bm{n}}]\Vert_{L^{\infty}(E)}\lesssim \Vert e_h\Vert_{L^{\infty}(K\cup K_1)}+ h_K^2\Vert \lambda u\Vert_{L^{\infty}(K\cup K_1)}.
\end{equation}
Adding \eqref{r1} and \eqref{r2} together, 
we may arrive at
\begin{equation*}
	h_K^2\Vert \lambda_hu_h\Vert_{L^{\infty}(K)}+h_K\Vert[\frac{\partial u_h}{\partial \bm{n}}]\Vert_{L^{\infty}(\partial K)} \lesssim \Vert e_h\Vert_{L^{\infty}(\omega_K)}+ h_K^2\Vert \lambda u-\lambda_h u_h\Vert_{L^{\infty}(\omega_K)}.
\end{equation*}
Thus,
\begin{equation}\label{Equation_eta_eh_1}
	\eta \lesssim \Vert e_h\Vert_{L^{\infty}(\Omega)}+
 \max_{K\in \mathcal{T}_h} h_K^2 \Vert \lambda u-\lambda_h u_h\Vert_{L^{\infty}(\omega_K)}.
\end{equation}
Note that
\begin{equation}\label{lamu-lamuh2}
      \Vert \lambda u-\lambda_h u_h\Vert_{L^{\infty}(\Omega)}\lesssim \vert \lambda-\lambda_h\vert +  \Vert e_h\Vert_{L^{\infty}(\Omega)}.
\end{equation}
From the proof argument of Lemmas \ref{lem lam-lamh} and \ref{lem eta-eta*__}, we may infer that 
\begin{equation*}
    |\lambda-\lambda_h|\lesssim \eta +h^{\frac{1}{3}}\Vert e_h\Vert_{L^{\infty}(\Omega)},
\end{equation*}
which, together with \eqref{Equation_eta_eh_1} and \eqref{lamu-lamuh2}, yields the efficiency of $\eta$.  \hfill \qed

\section{Nonconforming elements}
In this section, we may extend the results of the previous section to the nonconforming approximation. The CR finite element is widely used to approximate the model problem \eqref{Laplace}. For simplicity, if no otherwise specified, we will use the same notations as in section 3 for the nonconforming case without causing confusion.  
\par We use $V_h$, consisting of piecewise linear functions continuous at the midpoint of the interior edges and vanishing at the midpoint of
boundary edges, to denote
 the corresponding finite element space associated
with a shape-regular partition  $\mathcal{T}_h$. We denote by $\mathcal{E}_h$ the set of all edges and $\mathcal{E}_h^{I}$  the set of all interior edges.  The discrete eigenpair $(\lambda_h, u_h) $
satisfies 
\begin{equation}
    a_{h}(u_{h},v_{h})=\lambda_h(u_h,v_h) \ \ \ \ \forall \ v_h\in 
 V_h,
\end{equation}
where 
\begin{equation*}
 a_{h}(v,w):=\sum_{K\in \mathcal{T}_h}\int_K \nabla v \cdot \nabla w \ dx\ \ \ \ \forall\ v,w\in V+V_h.   
\end{equation*}
Let $T_h$ be defined similarly as in \eqref{defT_h} for nonconforming case, namely,
\begin{equation*}
a_{h}(T_h f,v_h)=(f,v_h)\ \ \ \ \forall \ v_h\in V_h.
\end{equation*}
We denote by $e_h:=u-u_h$ and let $x_0\in \overline{\Omega}$ satisfy $\vert e_h(x_0)\vert =\Vert e_h\Vert_{L^{\infty}(\Omega)}$. Moreover, we need to slightly modify the definitions of $\eta$ and $\eta^*$, which are defined by
\begin{equation*}
  \eta=\max_{K\in \mathcal{T}_h}\left(h_K^2 \Vert \lambda_h u_h\Vert_{L^{\infty}(K)}+h_K\Vert [\nabla u_h]\Vert_{L^{\infty}(\partial K)}\right)
\end{equation*}
and 
\begin{equation*}\label{eta_*NC}
\eta^*=\max_{K\in\mathcal{T}_h}(h_K^2 \Vert u_h\Vert_{L^{\infty}(K)}+h_K\Vert [ \nabla T_hu]\Vert_{L^{\infty}(\partial K)}).
\end{equation*}
\par First we shall obtain a similar result as \eqref{assosiation111}. To show this, we need to make some modification to the proof argument in section 3 because of the discontinuity of the approximate solution $u_h$ (see the details in \cite{2}). 
\begin{lemma}  Let $\delta_0$ be defined in \eqref{prodelta1} and \eqref{prodelta2}, under the same assumptions of Lemma \ref{lem nochetto}, we have    \begin{equation}\label{asonc}
       \Vert e_h\Vert_{L^{\infty}(\Omega)}\lesssim \vert (e_h,\delta_0)\vert + \max_{E\in \mathcal{E}_h^{I}}\Vert [u_h]\Vert_{L^{\infty}(E)}.
   \end{equation}
\end{lemma}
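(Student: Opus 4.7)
The plan is to mimic the argument of Lemma \ref{lem nochetto} while carefully tracking the discontinuity of the CR approximation across interior edges. From $\int_\Omega\delta_0\,dx=1$ and $\mathrm{supp}\,\delta_0\subset B:=B(x_0,\rho)$ (with $\rho=h^\beta$, $\beta\ge 1$), together with $\delta_0\ge 0$ and $\|\delta_0\|_{L^1(\Omega)}=1$, I would write the identity $e_h(x_0)=(e_h,\delta_0)+\int_B(e_h(x_0)-e_h(x))\delta_0(x)\,dx$, which immediately yields
\[
\|e_h\|_{L^\infty(\Omega)}=|e_h(x_0)|\le|(e_h,\delta_0)|+\sup_{x\in B}|e_h(x_0)-e_h(x)|,
\]
so the task reduces to bounding the oscillation of $e_h$ on $B$.

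Writing $e_h=u-u_h$, I would split this oscillation into a smooth piece $|u(x_0)-u(x)|\lesssim\rho^\alpha=h^{\alpha\beta}$, controlled by the Hölder regularity of $u$, and a discrete piece $|u_h(x_0)-u_h(x)|$, where the novelty of the nonconforming setting appears. I would connect $x_0$ and $x$ by a polygonal arc inside $\overline{B}$ that, by shape regularity and \eqref{size}, crosses only $O(1)$ interior edges in $\mathcal{E}_h^{I}$. Between consecutive crossings $u_h|_K$ is affine, so combining the inverse inequality with a uniform $L^\infty$-bound on $u_h$ (valid for small $h$, since $u_h$ converges to $u$ in $L^\infty$) I get $|u_h|_{W^{1,\infty}(K)}\lesssim h_K^{-1}$ on each visited element, while each crossing contributes at most $\max_{E\in\mathcal{E}_h^{I}}\|[u_h]\|_{L^\infty(E)}$. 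Telescoping the path yields $|u_h(x_0)-u_h(x)|\lesssim \rho/h_{K_0}+\max_{E\in\mathcal{E}_h^{I}}\|[u_h]\|_{L^\infty(E)}$, and converting powers of $h$ into powers of $h_{K_0}$ via \eqref{size} gives the preliminary bound
\[
\|e_h\|_{L^\infty(\Omega)}\lesssim |(e_h,\delta_0)|+h_{K_0}^{\alpha\beta\gamma}+h_{K_0}^{\beta\gamma-1}+\max_{E\in\mathcal{E}_h^{I}}\|[u_h]\|_{L^\infty(E)}.
\]

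The two residual $h_{K_0}$-powers play exactly the role of the $h_{K_0}^{\alpha\beta}$ term in Lemma \ref{lem nochetto}, and I would absorb them by a nonconforming analogue of Lemma \ref{Baohe}: an a priori lower bound $\|e_h\|_{L^\infty(\Omega)}\gtrsim h_{K^*}^2$ for some element $K^*\in\mathcal{T}_h$, proved by the same bubble-function / integration-by-parts template, with the extra jump terms coming from the nonconformity already captured by the maximum over $\mathcal{E}_h^{I}$. Choosing $\beta$ large enough that both $\alpha\beta\gamma$ and $\beta\gamma-1$ exceed $2/\gamma$, the two residuals fall below $\|e_h\|_{L^\infty(\Omega)}$ for sufficiently small $h$ and are absorbed, giving \eqref{asonc}. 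The most delicate step I anticipate is the uniform $L^\infty$-bound on $u_h$ underpinning the inverse inequality: unlike the conforming case it does not follow from $\|u_h\|_{L^2(\Omega)}=1$ alone and must be justified via a separate $L^\infty$-convergence result for CR eigenfunctions on shape-regular meshes; a secondary subtlety is that the polygonal path must be chosen to avoid boundary edges, which is automatic for $x_0$ in the interior of $\Omega$ and otherwise requires a small perturbation together with the vanishing of $\delta_0$ outside $B$.
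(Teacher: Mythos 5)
Your proposal is correct and follows essentially the route the paper intends here (the paper omits this proof, deferring to \cite{2} and to the modifications of the Section~3 argument): the regularization identity based on $\int_\Omega\delta_0\,dx=1$, the H\"older bound for the oscillation of $u$ on $B$, a telescoping bound for the piecewise-affine $u_h$ across the $O(1)$ interior edges met by $B$ (which is what produces the jump term), and absorption of the residual powers of $h_{K_0}$ by an a priori lower bound as in Lemma \ref{Baohe} after taking $\beta$ large enough. The one step you flag as delicate --- a uniform $L^{\infty}$ bound on $u_h$ --- is not actually needed, since the inverse estimate $\Vert \nabla u_h\Vert_{L^{\infty}(K)}\lesssim h_K^{-1}\Vert \nabla u_h\Vert_{L^{2}(K)}\leq h_K^{-1}\lambda_h^{1/2}\lesssim h_K^{-1}$ already gives the required local gradient bound, and Lemma \ref{Baohe} transfers to the CR case verbatim (its proof is local to a single element on which $u_h$ is affine, so no jump terms enter there, contrary to what you anticipate).
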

\par With \eqref{asonc}, we may construct the a posteriori error estimator of the CR nonconforming finite element  discretization  for eigenvalue problems. Recall that $g_0$ is the solution of \eqref{defg} and $I_h$ is the usual $P_{1}$-conforming finite element interpolation. Noting that $I_hg_0 \in V\cap V_h$ and integrating by parts, we deduce
\begin{align*}
        (e_h,-\Delta g_0)&=\sum_{K\in \mathcal{T}_h}\left( (\nabla e_h,\nabla g_0)_K-\langle u_h,\frac{\partial g_0}{\partial \bm{n}}\rangle_{\partial K}\right)\\
        &=\sum_{K\in \mathcal{T}_h}\left( (\nabla e_h,\nabla (g_0-I_hg_0))_K-\langle u_h,\frac{\partial g_0}{\partial \bm{n}}\rangle_{\partial K}\right)+(\lambda u-\lambda_h u_h,I_hg_0)\\   
        &=(\lambda u,g_0-I_hg_0)-\sum_{E\in \mathcal{E}_h}\left(\langle [u_h],\frac{\partial g_0}{\partial \bm{n}}\rangle_E+\langle [\frac{\partial u_h}{\partial \bm{n}}],g_0-I_hg_0\rangle_E\right)\\
        &\ \ \ \ +(\lambda u-\lambda_h u_h,I_hg_0)\\
        &=(\lambda_h u_h,g_0-I_hg_0)-\sum_{E\in \mathcal{E}_h}\left(\langle [u_h],\frac{\partial g_0}{\partial \bm{n}}\rangle_E+\langle [\frac{\partial u_h}{\partial \bm{n}}],g_0-I_hg_0\rangle_E\right)\\
        &\ \ \ \ +(\lambda u-\lambda_h u_h,g_0).
\end{align*}
 With the interpolation theory and the inverse inequality, we immediately obtain
\begin{align}
    \Vert e_h\Vert_{L^{\infty}(\Omega)}\lesssim \vert \ln{h}\vert (\Vert \lambda u-\lambda_h u_h\Vert_{L^2(\Omega)}+\eta).
\end{align}
Since  \eqref{lam-lamh} and \eqref{u-uh__} are still valid for nonconforming cases, we may obtain 
\begin{equation}\label{relation_NC}
    \Vert e_h\Vert_{L^{\infty}(\Omega)}\lesssim \vert \ln{h}\vert  (\Vert (T-T_h)u\Vert_{L^2(\Omega)}+\eta).
\end{equation}

Then we have the following result.
\begin{lemma} 
There exists a positive number $h_0$ such that for any $h\leq h_0$, we have
\begin{equation}\label{T-Th_NC}
      \Vert Tu-T_hu\Vert_{L^2(\Omega)}\lesssim \eta^*+h^2\Vert e_h\Vert_{L^{\infty}(\Omega)}.
\end{equation}    
\end{lemma}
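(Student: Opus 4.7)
The plan is to mirror the Aubin-Nitsche duality argument of Lemma \ref{lem lam-lamh}, adapted to the CR nonconforming setting. First I would introduce the auxiliary problem $-\Delta\omega=(T-T_h)u$ in $\Omega$ with $\omega|_{\partial\Omega}=0$. Repeating the chain of estimates in \eqref{est_w}, the $W^{2,p_0}$-regularity of \eqref{priori} together with H\"older's inequality on a fixed $p_0\in(1,4/3)$ yields $|\omega|_{W^{2,1}(\Omega)}\lesssim\|(T-T_h)u\|_{L^2(\Omega)}$, so the whole lemma reduces to establishing
\begin{equation*}
\|(T-T_h)u\|_{L^2(\Omega)}^2\lesssim\bigl(\eta^*+h^2\|e_h\|_{L^{\infty}(\Omega)}\bigr)\,|\omega|_{W^{2,1}(\Omega)}.
\end{equation*}

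To reach this bound I would write $\|(T-T_h)u\|_{L^2(\Omega)}^2=((T-T_h)u,-\Delta\omega)$ and integrate by parts element by element. Because $T_h u$ lives only in the broken CR space, the procedure generates, in addition to the interior and edge residuals familiar from Lemma \ref{lem lam-lamh}, an extra nonconforming contribution $\sum_{E\in\mathcal{E}_h}\langle[T_h u],\frac{\partial\omega}{\partial\bm{n}}\rangle_E$ coming from the jumps of $T_h u$ across edges. To isolate the residual structure of $\eta^*$, I would subtract $a_h(T_h u,I_h\omega)=(u,I_h\omega)$; this is legitimate because in two dimensions the standard continuous $P_1$ interpolant $I_h\omega$ is linear between vertices and vanishes at boundary vertices, hence is continuous at interior edge midpoints and vanishes at boundary midpoints, and therefore belongs to $V_h$. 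Splitting $u=u_h+e_h$ in the resulting volume term and using the interpolation estimate $\|\omega-I_h\omega\|_{L^1(K)}+h_K\|\omega-I_h\omega\|_{L^1(\partial K)}\lesssim h_K^2|\omega|_{W^{2,1}(\omega_K)}$ reproduces the interior part of $\eta^*$ from the $u_h$-term, the tolerated $h^2\|e_h\|_{L^{\infty}(\Omega)}|\omega|_{W^{2,1}(\Omega)}$ from the $e_h$-term, and the $[\partial T_h u/\partial\bm{n}]$ part of $\eta^*$ from the edge-residual sum, exactly as in the conforming case.

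The genuinely new ingredient is the consistency term $\sum_E\langle[T_h u],\frac{\partial\omega}{\partial\bm{n}}\rangle_E$. The CR midpoint-continuity forces $[T_h u]$ to be a linear function on each edge that vanishes at the midpoint, so $\int_E[T_h u]\,ds=0$ on every edge. Subtracting the edge average of $\partial\omega/\partial\bm{n}$ therefore picks up a factor $h_E$ and produces $\lesssim\sum_E h_E\|[T_h u]\|_{L^{\infty}(E)}|\omega|_{W^{2,1}(\omega_E)}$. Using once more that $[T_h u]|_E$ is linear and vanishes at its midpoint yields $\|[T_h u]\|_{L^{\infty}(E)}\lesssim h_E\|[\nabla T_h u]\|_{L^{\infty}(E)}$, so the whole consistency error is controlled by $h\cdot\eta^*\cdot|\omega|_{W^{2,1}(\Omega)}$ and is absorbed into the leading term for small $h$. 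The main point of care I expect is verifying the interpolant inclusion $I_h\omega\in V_h$ in the CR framework and ensuring the $W^{2,p_0}$-regularity chain behind $|\omega|_{W^{2,1}}\lesssim\|(T-T_h)u\|_{L^2}$ remains valid on the arbitrary (possibly non-convex) polygonal domain; both are at hand from Section 2 but must be invoked cleanly to land exactly on the $\eta^*+h^2\|e_h\|_{L^\infty(\Omega)}$ structure.
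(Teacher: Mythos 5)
Your proposal follows essentially the same route as the paper: the Aubin--Nitsche duality with $-\Delta\omega=(T-T_h)u$, elementwise integration by parts producing the residual terms plus the consistency sum $\sum_E\langle[T_hu],\partial\omega/\partial\bm{n}\rangle_E$, Galerkin orthogonality against the conforming interpolant $I_h\omega\in V\cap V_h$, the midpoint-vanishing of $[T_hu]$ to control the consistency term by $h_E\Vert[\nabla T_hu]\Vert_{L^\infty(E)}$, and the regularity chain \eqref{est_w}. The only quibble is your claim that subtracting the edge average of $\partial\omega/\partial\bm{n}$ yields an \emph{extra} factor $h_E$ (scaling of the $L^1(E)$ oscillation against $|\omega|_{W^{2,1}(K)}$ gives no such gain), but this is harmless since the resulting bound $\eta^*\,|\omega|_{W^{2,1}(\Omega)}$ is all that is needed.
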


\begin{proof}
We consider the auxiliary problem \eqref{aubin} again with the source term $\omega:=Tu-T_hu$. Then we have 
\begin{align}\label{Inv1}
    \Vert Tu-T_hu\Vert_{L^2(\Omega)}^2=\sum_{K\in \mathcal{T}_h}\langle T_hu,\frac{\partial \omega}{\partial \bm{n}}\rangle_{\partial K}+a_h(Tu-T_hu,\omega).
\end{align}
For the first term in \eqref{Inv1}, using the trace theorem and the continuity of $T_hu$ at the midpoint of the interior edges,  we get
\begin{align}\label{T-ThNC}
\begin{aligned}
    \vert \sum_{K\in \mathcal{T}_h}\langle T_hu,\frac{\partial \omega}{\partial \bm{n}}\rangle_{\partial K}\vert &\lesssim \Vert \omega\Vert_{W^{2,1}(\Omega)}\max_{K\in \mathcal{T}_h}\Vert[T_hu]\Vert_{L^{\infty}(\partial K)}\\
    & \lesssim  \Vert \omega\Vert_{W^{2,1}(\Omega)}\max_{K\in \mathcal{T}_h}h_K\Vert[\nabla T_hu]\Vert_{L^{\infty}(\partial K)}.
    \end{aligned}
\end{align}
For the second term, noting that
\begin{equation}\notag
    a_h(Tu-T_h u,v_h)=0\ \ \ \ \forall \ v_h\in V\cap V_h,
\end{equation}
we have
\begin{align*}
    \vert a_h(Tu-T_h u,\omega)\vert&=\vert a_h(Tu-T_h u,\omega-I_h \omega)\vert \\
    &\lesssim \vert \omega\vert_{W^{2,1}(\Omega)}\max_{K\in \mathcal{T}_h}(h_K^2\Vert u\Vert_{L^{\infty}(K)}+h_K\Vert  [\nabla T_h u]\Vert_{L^{\infty}(\partial K)}).
\end{align*}
Combining \eqref{est_w}, \eqref{T-ThNC} and the above inequality, we complete the proof.\qed
\end{proof}
\par To estimate $\eta^*$, we use a similar argument as in the proof of Lemma \ref{lem eta-eta*__}. In fact, we only need to slightly modify the estimate for $\Vert G_hg_1\Vert_{L^1(\Omega)}$ because of the discontinuity, where $G_h$ denotes the Galerkin approximation with the CR finite element. Let $g_1$ be defined in \eqref{def_g1} and we denote by $\varphi$ the sign of $G_h g_1$. Moreover, let $\psi$ be defined as in the proof of Lemma \ref{estG_hg_1}.  Using the interpolation theory and the approximation result
\begin{equation*}
     \vert \psi-G_h\psi\vert_{H^1(\Omega)}\lesssim h^r,
 \end{equation*}
we deduce
\begin{align*}
 	\Vert G_hg_1\Vert_{L^1(\Omega)}&=(G_hg_1,\varphi)\\\notag
  &=a_h(G_hg_1,G_h\psi-I_h \psi)+a_{h}(G_hg_1,I_h\psi)\\
  &=a_h(G_hg_1,G_h\psi-\psi)+a_h(G_hg_1,\psi-I_h \psi)+a(g_1,I_h\psi)\\
 	&\lesssim h^r\vert G_hg_1\vert_{H^1(\Omega)}+\vert g_1\vert_{W^{1,p}(\Omega)}\vert \psi\vert_{W^{1,p^*}(\Omega)},
 \end{align*}
 where $\frac{1}{p} +\frac{1}{p^*}=1$.
Following the same argument as in the proof of  Lemma \ref{lem eta-eta*__}, we   arrive at 
 \begin{equation}\label{eta-eta*_NC}
     \eta^* \lesssim \eta+ h^{\frac{1}{3}}  \Vert e_h\Vert_{L^{\infty}(\Omega)}.
\end{equation}

\par Now we may give the equivalence between $\Vert e_h\Vert_{L^{\infty}(\Omega)}$ and $\eta$.
\begin{theorem} There exists a positive number $h_0>0$ such that for any $0<h\leq h_0$, the following estimates hold
    \begin{equation*}
\eta\lesssim \Vert e_h\Vert_{L^{\infty}(\Omega)}\lesssim  \vert \ln{h}\vert\eta.
    \end{equation*}
\end{theorem}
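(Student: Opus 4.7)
The plan is to assemble the three ingredients already established in this section — the relation \eqref{relation_NC}, the bound \eqref{T-Th_NC}, and the comparison \eqref{eta-eta*_NC} — into the reliability estimate, and then to adapt the local bubble-function argument used in the proof of Theorem \ref{main result} to obtain the efficiency in the nonconforming setting.

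For the reliability, I would substitute \eqref{T-Th_NC} into \eqref{relation_NC} to obtain
$$\Vert e_h\Vert_{L^{\infty}(\Omega)}\lesssim |\ln h|\bigl(\eta^*+\eta+h^2\Vert e_h\Vert_{L^{\infty}(\Omega)}\bigr),$$
and then use \eqref{eta-eta*_NC} to replace $\eta^*$ by $\eta+h^{1/3}\Vert e_h\Vert_{L^{\infty}(\Omega)}$. Since $(h^{1/3}+h^2)|\ln h|=o(1)$ as $h\to 0$, choosing $h_0$ small enough allows the $\Vert e_h\Vert_{L^{\infty}(\Omega)}$ contributions on the right to be absorbed into the left, leaving $\Vert e_h\Vert_{L^{\infty}(\Omega)}\lesssim |\ln h|\eta$.

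For the efficiency, I would mimic the bubble-function argument of Theorem \ref{main result}. Testing the residual identity with $v_1=\lambda_h u_h b_K$, where $b_K$ is the cubic element bubble, handles the interior volume term $h_K^2\Vert \lambda_h u_h\Vert_{L^{\infty}(K)}$ exactly as in the conforming case, because $b_K$ vanishes on $\partial K$ and hence lies in $V\cap V_h$, so $a_h(u_h,v_1)=\lambda_h(u_h,v_1)$. For the jump term $h_K\Vert[\nabla u_h]\Vert_{L^{\infty}(\partial K)}$, I would use an edge bubble $\varphi_E\in H_0^1(K\cup K_1)$ supported on the patch of the two elements sharing $E$, yielding the local lower bound
$$h_K^2\Vert \lambda_h u_h\Vert_{L^{\infty}(K)}+h_K\Vert[\nabla u_h]\Vert_{L^{\infty}(\partial K)}\lesssim \Vert e_h\Vert_{L^{\infty}(\omega_K)}+h_K^2\Vert \lambda u-\lambda_h u_h\Vert_{L^{\infty}(\omega_K)}.$$
Combining this with $|\lambda-\lambda_h|\lesssim \eta+h^{1/3}\Vert e_h\Vert_{L^{\infty}(\Omega)}$, which comes from \eqref{T-Th_NC}, \eqref{eta-eta*_NC} and \eqref{lam-lamh}, and absorbing the small terms delivers the efficiency.

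The main obstacle is the edge-bubble step in the efficiency proof: because $u_h$ is only weakly continuous at edge midpoints, integration by parts against $\varphi_E$ produces additional interior-edge contributions involving $[u_h]$ that are absent in the conforming case. These have to be handled using the midpoint-vanishing orthogonality intrinsic to CR functions (so that interactions of $[u_h]$ with polynomials of the right degree along $E$ vanish) and the scaled trace/inverse estimate $\Vert [u_h]\Vert_{L^{\infty}(E)}\lesssim h_K\Vert [\nabla u_h]\Vert_{L^{\infty}(\partial K)}$, so that the extra contributions are either annihilated or absorbed into the two terms on the right-hand side of the local bound, as in \cite{2}.
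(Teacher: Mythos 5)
Your reliability argument coincides with the paper's: substitute \eqref{T-Th_NC} into \eqref{relation_NC}, replace $\eta^*$ via \eqref{eta-eta*_NC}, and absorb the $(h^{1/3}+h^2)\vert\ln h\vert\,\Vert e_h\Vert_{L^{\infty}(\Omega)}$ contribution into the left-hand side for small $h$; that part is fine.

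The efficiency part has a genuine gap. In the nonconforming setting the estimator carries the \emph{full} gradient jump $h_K\Vert[\nabla u_h]\Vert_{L^{\infty}(\partial K)}$, not merely the normal-derivative jump, and the edge-bubble test function only ever produces the pairing $\langle v_2,[\frac{\partial u_h}{\partial \bm{n}}]\rangle_E$, hence only controls the normal component $[\nabla u_h]\cdot\bm{n}_E$. Your proposal never bounds the tangential component $[\nabla u_h]\cdot\bm{\tau}_E$, and the devices you list do not supply such a bound: the midpoint orthogonality $\int_E[u_h]\,ds=0$ only annihilates constants, and your inverse estimate $\Vert[u_h]\Vert_{L^{\infty}(E)}\lesssim h_K\Vert[\nabla u_h]\Vert_{L^{\infty}(\partial K)}$ runs in the wrong direction --- it bounds the extra terms by the very quantity you are trying to estimate, which sits on the \emph{left} of the local lower bound and cannot be absorbed there. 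The paper closes this with a separate one-line argument: $[\nabla u_h]\cdot\bm{\tau}_E$ is the tangential derivative along $E$ of the linear polynomial $[u_h]\vert_E$, so a one-dimensional inverse inequality gives $h_E\Vert[\nabla u_h]\cdot\bm{\tau}_E\Vert_{L^{\infty}(E)}\lesssim\Vert[u_h]\Vert_{L^{\infty}(E)}$, and since $u$ is continuous, $[u_h]=-[e_h]$ on every edge, whence $\Vert[u_h]\Vert_{L^{\infty}(E)}\leq 2\Vert e_h\Vert_{L^{\infty}(\Omega)}$; only after the tangential part is disposed of in this way does the bubble-function argument of Theorem \ref{main result} finish the normal part. (A minor secondary point: $v_1=\lambda_h u_h b_K$ does not lie in $V_h$, since CR functions are piecewise linear; what is actually used is $a_h(u_h,v_1)=0$, which holds because $u_h$ is linear on $K$ and $v_1$ vanishes on $\partial K$.)
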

\begin{proof}
 The reliability of $\eta$ is straightforward in view of \eqref{relation_NC}, \eqref{T-Th_NC} and \eqref{eta-eta*_NC}. For the efficiency, with the continuity of $u$, we have 
$$ h_{E}\Vert [\nabla u_h]\cdot \bm{\tau}_E\Vert_{L^{\infty}(E)}\lesssim  \Vert [u_h]\Vert_{L^{\infty}(E)}= \Vert [e_h]\Vert_{L^{\infty}(E)}\leq 2\Vert e_h\Vert_{L^{\infty}(\Omega)} \ \ \ \ \forall \  E\in \mathcal{E}_h,$$
where $\bm{\tau}_E$ is the tangential unit vector of $E$ given by a counter clockwise $90^{\circ}$ rotation of $\bm{n}_E$.
With a similar argument as in the proof of Theorem \ref{main result}, we may get the efficiency of $\eta$. \qed
\end{proof}

 \section{Numerical experiments}
In this section we use the $P_1$-conforming finite element to approximate the Laplacian eigenvalue problems over the L-shape domain and the slit domain. For each domain, we perform the following adaptive procedure: 
$$ \textbf{Solve}\rightarrow \textbf{Estimate} \rightarrow \textbf{Mark} \rightarrow \textbf{Refine}.$$
Specifically, start with an initial uniform mesh $\mathcal{T}_0$ and obtain the approximate eigenfunction $u_0$. Then, given the approximate solution $u_j,\ j=0,1,2,\cdots$ ($j$ denotes the number of mesh level), we get the next mesh by refining the element $K$ satisfying:
\begin{equation}
    \eta_K\geq \theta \eta,
\end{equation}
where $0<\theta<1$ is a fixed positive constant (we  take $\theta=0.7$). To get the admissible mesh, we adopt the refining method in \cite{refinement}. For simplicity, we denote the degrees of freedom on the $j$th mesh level by $N_j$.

\begin{example}
We use the $P_{1}$-conforming finite element to approximate the Laplace eigenvalue problem in the L-shaped domain $(0,\pi)^2 \backslash ([\frac{\pi}{2},\pi)\times (0,\frac{\pi}{2}])$. We test the AFEM for computing the first eigenpair. Set the initial mesh size $h_0= \frac{\pi}{2^3}$ and the corresponding degrees of freedom $N_0=33$. The adaptive procedure stops when the degrees of freedom exceed $4\times 10^6$ and the final degrees of freedom equal to $4299472$.
\end{example}

\begin{table}[H]
    \centering    
    \begin{tabular}{cccc}\hline
       \textbf{mesh level}  & \textbf{degrees of freedom}&\textbf{a posteriori error}&\textbf{product} \\
        $j$& $N_j$& $\eta$&$ N_j\times \eta$\\
        \hline

0&33&	8.9705E-01&	29.6027\\
5&122&	2.1379E-01	&26.0821\\
10&408&	6.1547E-02	&25.1110\\
15&1312&	2.0559E-02&	26.9734\\
20&3394	&8.1732E-03&	27.7397\\
25&8987	&3.2335E-03&	29.0591\\
30&24895&	1.0904E-03&	27.1443\\ \hline
    \end{tabular}
    \caption{Degrees of freedom, the a posteriori error and their product in L-shape domain $(0,\pi)^2 \backslash ([\frac{\pi}{2},\pi)\times (0,\frac{\pi}{2}])$ for the first eigenpair on different mesh levels.}
    \label{tab:L1}
\end{table}

\par 
Figure \ref{fig:L1mesh} shows the adaptive mesh with $7822$ elements, from which we can see that our adaptive algorithm captures the singularities accurately. Figure \ref{fig:L1rate} shows that the convergence rate of $\eta$ is $N^{-1}$, which is quasi-optimal. Moreover, from Table \ref{tab:L1} we may see that the product of $N_j$ and $\eta$ is gradually stable around a certain constant, which illustrates that $\eta$ is $O(N^{-1})$.

\begin{figure}[H]
    \centering  
    \includegraphics[width=7.2cm, height=5.4cm]{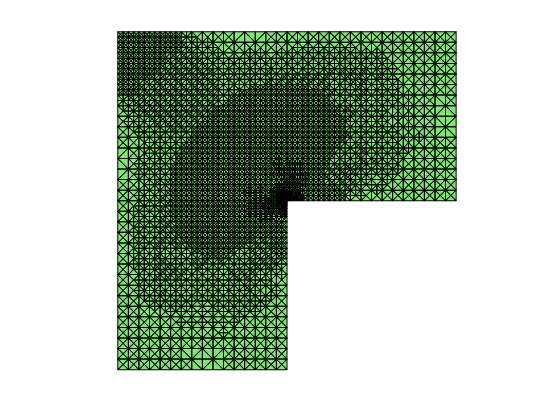}
    \caption{The adaptive mesh with $7822$ elements in L-shape domain $(0,\pi)^2 \backslash ([\frac{\pi}{2},\pi)\times (0,\frac{\pi}{2}])$ for the first eigenpair.}
    \label{fig:L1mesh} 
 \end{figure}

\begin{figure}[H]
    \centering  
    \includegraphics[width=7.2cm, height=5.4cm]{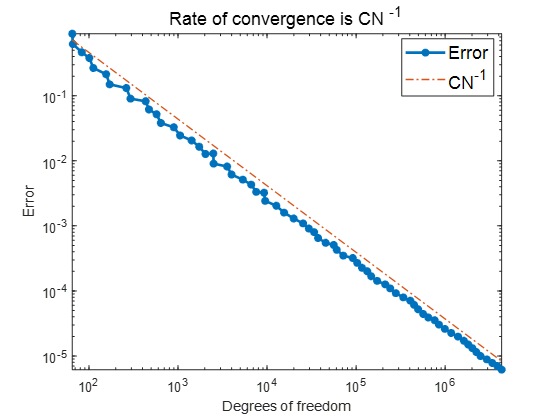}
    \caption{The  blue dots represent the pointwise a posteriori error estimator $\eta$ for the first eigenpair in L-shape domain $(0,\pi)^2 \backslash ([\frac{\pi}{2},\pi)\times (0,\frac{\pi}{2}])$. The dotted line represents the fitted convergence rate.}
    \label{fig:L1rate}
\end{figure}

\begin{example}
We consider the Laplace eigenvalue problem in the slit domain $\Omega=\{(x,y): \vert x\vert +\vert y\vert <1\}$ with the crack $[0,1]\times \{0\}$. We compute the first eigenpair with the AFEM. Set the initial mesh size $h_0= \frac{1}{2^3}$ and the corresponding degrees of freedom $N_0=105$. The adaptive procedure stops when the degrees of freedom exceed $4\times 10^6$ and the final degrees of freedom equal to $4530260$. 
\end{example}

\begin{table}[H]
    \centering
    \begin{tabular}{cccc}\hline
       \textbf{mesh level}  & \textbf{degrees of freedom}&\textbf{a posteriori error}&\textbf{product} \\
        $j$& $N_j$& $\eta$&$ N_j\times \eta$\\
        \hline
20&1936&	2.9157E-02&	56.4472\\
25&4884	&1.4588E-02	&71.2492\\
30&10152&	5.9188E-03	&60.0880\\
35&26280&	2.5793E-03	&67.7840\\
40&49637&	1.2864E-03	&63.8526\\
45&106367&	6.4315E-04	&68.4096\\
50&196954&	3.2170E-04	&63.3602\\ \hline
    \end{tabular}
    \caption{Degrees of freedom, the a posteriori error and their product in  slit domain $\Omega=\{(x,y): \vert x\vert +\vert y\vert <1\}$ with
the crack $[0,1]\times \{0\}$ for the first eigenpair on different mesh levels.}
    \label{tab:C1}
\end{table}

\begin{figure}[H]
    \centering  
    \includegraphics[width=10cm, height=7.5cm]{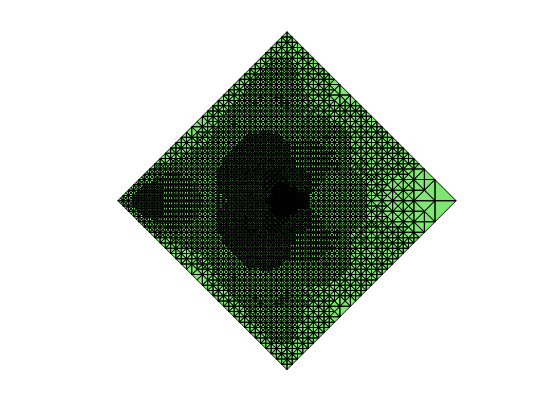}
    \caption{The adaptive mesh with $9971$ elements in slit domain $\Omega=\{(x,y): \vert x\vert +\vert y\vert <1\}$ with the crack $[0,1]\times \{0\}$ for the first eigenpair.}
    \label{fig:C1mesh}
\end{figure}

\begin{figure}[H]
    \centering  \includegraphics[width=8cm, height=6cm]{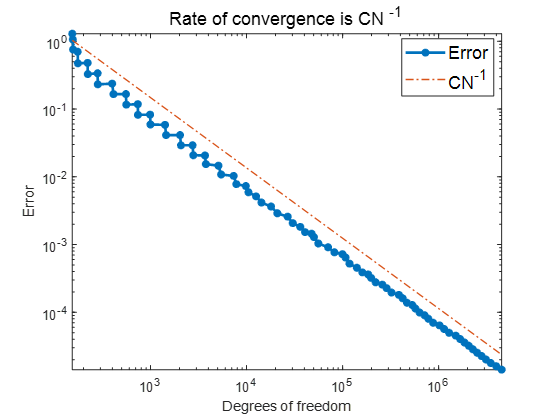}
    \caption{The blue dots represent the pointwise a posteriori error estimator $\eta$ for the first eigenpair in  slit domain $\Omega=\{(x,y): \vert x\vert +\vert y\vert <1\}$ with the crack $[0,1]\times \{0\}$. The dotted line represents the fitted convergence rate.}
    \label{fig:C1rate}
\end{figure}
Figure \ref{fig:C1mesh} shows the adaptive mesh with $9971$ elements. We can see that our adaptive algorithm captures the singularities accurately.  Figure \ref{fig:C1rate} shows that the convergence rate of $\eta$ is $N^{-1}$, which is quasi-optimal. Similarly, from Table \ref{tab:C1} we may see that the product of $N_j$ and $\eta$ is gradually stable around a certain constant, which illustrates that $\eta$ is $O(N^{-1})$.
\section{Conclusion}
In this paper, we propose a pointwise a posteriori error estimator for elliptic eigenvalue problems with AFEMs. It is proved that the pointwise a posteriori error estimator is reliable and efficient. Numerical results verify our theoretical findings. 

\appendix
\section{Appendix}
In the appendix, we give the proof of \eqref{smooth2} and Lemma \ref{Baohe}.
Before giving the proof of \eqref{smooth2}, we first review a pointwise estimate  for Green's function in \cite{5}.
\begin{lemma}
   Given a polygonal domain  $\Omega$ with vertices $z_j$ ($j=1,2,...,M$), let $G=G(x,y)$ be Green's function satisfying for any $y\in \Omega$,
   $$ -\Delta_x G(x,y)=\delta(x-y) \ \ \ \  x \in \Omega,\ \ G(x,y)=0 \ \ \ \ x\in \partial \Omega,$$
   where $\delta$ is the standard dirac delta function.
   Then we have pointwise estimates for its derivatives
   \begin{equation}\label{estimate_G}
      \vert \nabla_x^i G(x,y)\vert \lesssim\vert x-y\vert^{-i}+\sum_{j=1}^M \vert x-z_j\vert^{\gamma_j-i}\ \ \ \ i=1,2,
   \end{equation}
   where $\gamma_j=\frac{\pi}{\theta_j}$ with $\theta_j$ being the interior angle at $z_j$ and  $\nabla^i_x$ denotes the $i$th order derivative with respect to $x$.
\end{lemma}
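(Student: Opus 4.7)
The plan is to decompose Green's function into a free-space singular part and a boundary correction, then apply Kondratiev-type corner analysis to the correction. Write
\begin{equation*}
G(x,y) = \Gamma(x-y) - H(x,y),
\end{equation*}
where $\Gamma(z) = -\frac{1}{2\pi}\log|z|$ is the fundamental solution of $-\Delta$ on $\mathbb{R}^2$, and $H(\cdot,y)$ is harmonic in $\Omega$ with Dirichlet trace $\Gamma(\cdot - y)|_{\partial\Omega}$. Direct differentiation gives $|\nabla_x^i \Gamma(x-y)| \leq C|x-y|^{-i}$, which already supplies the first term in the claimed bound. It therefore remains to show that $|\nabla_x^i H(x,y)| \lesssim \sum_{j=1}^{M} |x-z_j|^{\gamma_j - i}$, plus a uniformly bounded remainder.

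Away from all vertices, i.e.\ when $|x - z_j| \geq c > 0$ for every $j$, one combines interior elliptic regularity with boundary regularity up to each smooth edge of $\partial\Omega$ to obtain $|\nabla_x^i H(x,y)| \lesssim 1$, which is dominated by the right-hand side. Near a vertex $z_j$, introduce local polar coordinates $(r_j,\vartheta_j)$ with $\vartheta_j \in (0,\theta_j)$. Since the Dirichlet trace of $H(\cdot,y)$ on the two edges meeting at $z_j$ is smooth away from $z_j$, the Kondratiev--Grisvard singular-function theory for the Laplacian on a sector yields a local expansion
\begin{equation*}
H(x,y) = \sum_{k\geq 1} c_{j,k}(y)\, r_j^{k\gamma_j}\sin(k\gamma_j \vartheta_j) + \widetilde H_j(x,y),
\end{equation*}
where $\widetilde H_j(\cdot,y)$ is regular enough that $|\nabla_x^i \widetilde H_j|$ is bounded, and the coefficients $c_{j,k}(y)$ are extracted by pairing the boundary data of $H$ against the dual singular functions $r_j^{-k\gamma_j}\sin(k\gamma_j\vartheta_j)$ via the Mellin transform. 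Differentiating the $k=1$ term supplies precisely the contribution of size $r_j^{\gamma_j - i}$; higher-$k$ terms are smaller since $k\gamma_j - i \geq \gamma_j - i$ in the relevant regime $r_j \leq 1$.

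A partition of unity subordinate to a small ball around $y$, small balls around each vertex $z_j$, and the remaining compact interior region then glues the local bounds into the global pointwise estimate. The main technical obstacle is obtaining the coefficient bound $|c_{j,k}(y)| \lesssim 1$ uniformly in $y$, which becomes delicate when $y$ itself approaches a vertex so that $\Gamma(\cdot - y)|_{\partial\Omega}$ develops its own logarithmic singularity near $z_j$. This is handled by splitting the boundary data of $H$ into a far-field piece (to which the Kondratiev expansion applies cleanly) and a localized piece (analyzed directly through the explicit logarithmic structure of $\Gamma$ and the conformal/Mellin representation on the sector), and verifying that the localized contribution also obeys the claimed $r_j^{\gamma_j - i}$ bound.
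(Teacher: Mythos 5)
First, be aware that the paper does not prove this lemma at all: it appears in the appendix explicitly as a \emph{review} of a known pointwise Green's function estimate and is simply cited from the reference given there, so there is no in-paper argument to measure yours against. Your outline --- writing $G(x,y)=\Gamma(x-y)-H(x,y)$ with $\Gamma$ the free-space fundamental solution and $H(\cdot,y)$ the harmonic corrector, expanding $H$ near each vertex in the Kondratiev--Grisvard singular functions $r_j^{k\gamma_j}\sin(k\gamma_j\vartheta_j)$, and gluing with a partition of unity --- is the standard strategy for results of this type and is very likely close to what the cited source actually does. In that sense the plan is the right one.

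As a proof, however, it contains one concretely false intermediate claim and leaves the genuinely hard point unresolved. The false claim is that away from the vertices one has $|\nabla_x^i H(x,y)|\lesssim 1$. This cannot hold uniformly in $y$: if $x$ sits near the interior of an edge and $y$ approaches $\partial\Omega$ close to $x$, the local reflection principle gives $H(x,y)\approx\Gamma(x-y^{*})$ with $y^{*}$ the mirror image of $y$ across that edge, so $|\nabla_x^{2}H(x,y)|\sim|x-y^{*}|^{-2}$ blows up. The bound that does hold there is $|\nabla_x^{i}H(x,y)|\lesssim|x-y|^{-i}$ (using $|x-y^{*}|\geq|x-y|$ for the local reflection), which is still absorbed by the first term of the claimed estimate, but "interior regularity plus boundary regularity up to the smooth edges" as written does not yield it; you need reflection or boundary Schauder estimates scaled by $\mathrm{dist}(y,\partial\Omega)$. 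The unresolved point is exactly the one you flag yourself: the uniform-in-$y$ bound $|c_{j,k}(y)|\lesssim 1$ together with summability of the expansion over $k$, including the regimes where $y$ approaches $z_j$ or the adjacent edges (and the resonant cases where $\gamma_j$ is an integer and logarithmic terms can appear). Since the entire content of the lemma is its uniformity in $y$, what you have is a credible roadmap rather than a proof; given that the paper itself treats this as a quoted result, the appropriate course here is to cite the source rather than to reprove it.
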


\par \noindent{\bf Proof of \eqref{smooth2}}:
    Let $\Omega_0=\Omega \backslash B$, where $B$ is defined in \eqref{prodelta1}. We divide $\vert g_0\vert_{W^{2,1}(\Omega)}$ into two parts $\vert g_0\vert_{W^{2,1}(B)}$ and $\vert g_0\vert_{2,1,\Omega_0}$. First for a fixed $p_0\in (1,\frac{4}{3})$, with  the H{\"o}lder inequality and \eqref{priori}, we have 
    \begin{align*}
        \vert g_0\vert_{W^{2,1}(B)} \leq \vert B\vert ^{\frac{1}{q_0}}\vert g_0\vert_{W^{2,p_0}(\Omega)}\lesssim \vert B\vert ^{\frac{1}{q_0}}\Vert \delta_0\Vert_{L^{p_0}(B)}\lesssim 1,
    \end{align*}
    where $\frac{1}{p_0}+\frac{1}{q_0}=1$. Then we estimate $\vert g_0\vert_{W^{2,1}(\Omega_0)}$.
\par Using the representation of Green’s function and \eqref{estimate_G}, we obtain 
\begin{align*}
\vert g_0\vert_{W^{2,1}(\Omega_0)}&\leq \int_{\Omega_0}\int_{B}\vert \nabla_x^2 G(x,y)\vert \delta_0(y)\ dydx\\
&\lesssim \int_{\Omega_0}\int_{B} \left(\vert x-y\vert^{-2}+\sum_{j=1}^M \vert x-z_j\vert^{\gamma_j-2} \right)\delta_0(y)\ dydx.
\end{align*}
Let 
$$\text{I}_1:= \int_{\Omega_0}\int_{B} \vert x-y\vert^{-2}\delta_0(y)\ dydx$$
and 
$$ \text{I}_2:=\int_{\Omega_0}\int_{B} \sum_{j=1}^M \vert x-z_j\vert^{\gamma_j-2} \delta_0(y)\ dydx.$$
By a direct computation, we have 
$$\text{I}_2\leq \Vert \delta_0\Vert_{L^1(B)}\int_{\Omega_0} \sum_{j=1}^M \vert x-z_j\vert^{\gamma_j-2} dx\lesssim 1.$$
To get the estimate for $\text{I}_1$ we use a dyadic decomposition of $\Omega_0$. Denote $d_j=2^j \rho$ for $j=0,1,...$ and define $A_j=\{x\in \Omega_0: d_{j-1}< \vert x-x_0\vert \leq d_j\}$ for $j\geq 1$. Then we choose $J$ such that $\text{diam}(\Omega)\leq d_J< 2\text{diam}(\Omega)$. Note that $\vert A_j\vert \lesssim  d_j^2$ and $J\lesssim \vert \ln{\rho}\vert$. We may deduce 
\begin{align*}
    \text{I}_1\leq \sum_{j=1}^J \vert A_j\vert d_{j-1}^{-2}\Vert \delta_0\Vert_{L^1(B)}\lesssim \vert \ln{\rho}\vert.
\end{align*}
 Adding the estimates for $\vert g_0\vert_{W^{2,1}(\Omega_0)}$ and $\vert g_0\vert_{W^{2,1}(B)}$ leads to \eqref{smooth2}. \hfill \qed

\par Next, we give the proof of Lemma \ref{Baohe}.
\par \noindent{\bf Proof of Lemma \ref{Baohe}}:
It is well known that $u\in C^{\infty}(U)$ for any $U\subset\subset \Omega$, , where $U\subset\subset \Omega$ represents that the closure of $U$ is contained in $\Omega$. Let ${y_0}\in \Omega$ satisfy $\vert u({y_0})\vert =\Vert u\Vert_{L^{\infty}(\Omega)}$. In view of the definition of $u$, we have 
\begin{equation}\label{y0}
    \vert \Delta u({y_0})\vert =\lambda \Vert u\Vert_{L^{\infty}(\Omega)}\gtrsim 1. 
\end{equation}
Since $u$ is smooth enough in the interior of $\Omega$, we may select an open set $U\subset \subset \Omega$ containing $y_0$ satisfying $\Vert u\Vert_{W^{3,\infty}(U)}\lesssim 1$. It is clear that the eigenvalues of the Hessian matrix of $u$, denoted by $s_1(x)$, $s_2(x)$ (for notational convenience, we denote $s_{1}=s_1(x)$ and $s_{2}=s_2(x)$),  satisfy 
 $$s_1 +s_2=\Delta u.$$ 
Then at least one of them (without the loss of generality, we assume it is $s_1$) satisfies 
\begin{equation}\label{eig}
     \vert s_1(y_0)\vert \geq \frac{1}{2} \vert \Delta u(y_0)\vert \gtrsim 1. 
\end{equation}
Let ${v_1}$ denote the eigenvector associated with $s_1$ and $K^*\subset U$ be an element containing $y_{0}$ in $\mathcal{T}_{h}$ (we can find $K^*$ when $h_{K^*}$ is small enough). Then there exist three points ${y_i}$ ($1\leq i \leq 3$) on $\partial {K^*}$  satisfying the following properties:
\begin{itemize}
\item[(i)] $y_1$ is some vertex of $K^*$;\\
\item[(ii)]
$y_1-y_2$ is parallel to $v_1(y_0)$ (specially, if $v_1(y_{0})$ is parallel to some edge of $K^*$ containing $y_1$, we take $y_2$ the other vertex of this edge).\\
\item[(iii)] Let $z_1$ and $z_2$ be vertices besides $y_1$ of the triangle $K^*$  (without the loss of generality, we assume $z_1$ is further away from $y_2$). Then $y_3=z_1$.
\end{itemize}
\par We define an local interpolation $\Tilde{I}_h$ (note that we assume any $K\in\mathcal{T}_h$ is a closed set) by 
\begin{align*}
   \Tilde{I}_hv=\begin{cases}
       \sum_{i=1}^3 v({y_i})\Tilde{\phi}_i, &x\in K^*\\
       0, &x\in \overline{\Omega}\backslash K^*
   \end{cases}
\end{align*}
for $v\in C^0(K^*)$,
where $\Tilde{\phi}_i\in P_1(K^*)$  satisfies $\Tilde{\phi}_i(y_j)=\delta_{ij}$, and $\delta_{ij}$ denotes the Kronecker notation. With the aid of \eqref{eig} and noting that $y_1-y_2$ is parallel to $v_1(y_0)$, we use the Taylor's expansion at $\frac{{y_1}+{y_2}}{2}$ and get 
\begin{align*}
    \vert u({y_1})+u({y_2})-2u(\frac{{y_1}+{y_2}}{2})\vert &\gtrsim h_{K^*}^2- \Vert u\Vert_{W^{3,\infty}(U)}h_{K^*}^3\\
    &\gtrsim (1-h_{K^*})h_{K^*}^2.
\end{align*}
Moreover, we obtain
\begin{equation*}
     \vert u({y_1})+u({y_2})-2u(\frac{{y_1}+{y_2}}{2})\vert \gtrsim h_{K^*}^2
\end{equation*}
when $h_{K^*}$ is small enough. Noting that $u({y_i})=\Tilde{I}_hu({y_i})$ ($i=1,2$), with the linearity of $\Tilde{I}_hu$ on $K^*$ , we have
\begin{align*}
    \vert u(\frac{{y_1}+{y_2}}{2})-\Tilde{I}_hu(\frac{{y_1}+{y_2}}{2})\vert&=\frac{1}{2}\vert u({y_1})+u({y_2})-2u(\frac{{y_1}+{y_2}}{2})\vert \\
    &\gtrsim h_{K^*}^2.
\end{align*}
The properties (i), (ii) and (iii) guarantee the boundedness of $\Tilde{I}_h$ in $L^{\infty}$-norm, namely,
\begin{equation*}
  ||\Tilde{I}_h v||_{L^{\infty}(\Omega)}=  ||\Tilde{I}_h v||_{L^{\infty}(K^*)}\leq ||v||_{L^{\infty}(K^*)}.
\end{equation*}
Therefore, we may deduce 
\begin{align*}
        h_{K^*}^2&\lesssim \Vert u-\Tilde{I}_hu\Vert_{L^{\infty}(K^*)}=\Vert e_h-\Tilde{I}_he_h\Vert_{L^{\infty}(K^*)}\\ 
        &\lesssim \Vert e_h\Vert_{L^{\infty}(K^*)}\leq \Vert e_h\Vert_{L^{\infty}(\Omega)}, 
\end{align*} 
which completes the proof.  \hfill \qed

\begin{small}
\bibliographystyle{plain}
\bibliography{reference}
\end{small}
\end{document}